\theoremstyle{plain}
\newtheorem{theorem}{Theorem}[section]
\newtheorem{proposition}[theorem]{Proposition}
\newtheorem{lemma}[theorem]{Lemma}
\newtheorem{notation}[theorem]{Notation}
\newtheorem{corollary}[theorem]{Corollary}
\theoremstyle{definition}
\newtheorem{definition}[theorem]{Definition}
\newtheorem{example}[theorem]{Example}
\newtheorem{remark}[theorem]{Remark}
\begin{document}
	\def\sect#1{\section*{\leftline{\large\bf #1}}}
	\def\th#1{\noindent{\bf #1}\bgroup\it}
	\def\endth{\egroup\par}
	
	\title[Total orderization invariant maps on distributive lattices]
	{Total orderization invariant maps on distributive lattices}
	\author{Christopher Michael Schwanke}
	\address{Department of Mathematics and Applied Mathematics, University of Pretoria, Private Bag X20, Hatfield 0028, South Africa}
	\email{cmschwanke26@gmail.com}
	\date{\today}
	\subjclass[2020]{05B35, 46A40}
	\keywords{distributive lattice, vector lattice, positively homogeneous function, orthosymmetric map, orthogonally additive polynomial, total orderization}
	
	\begin{abstract}
		Given any finite subset $A$ of order $n$ of a distributive lattice and $k\in\{1,...,n\}$, there is a natural extension of the median operation to $n$ variables which generalizes the notion of the $k$th smallest element of $A$. By applying each of these operations to $A$, a totally ordered set $to(A)$ is obtained. We refer to $to(A)$ as the total orderization of $A$. After developing a brief theory of total orderization invariant maps on distributive lattices, it is shown in this paper how these functions generalize and provide new characterizations for symmetric continuous positively homogeneous functions, bounded orthosymmetric multilinear maps, and certain power sum polynomials on vector lattices. These theorems generalize several results by Bernau, Huijsmans, Kusraev, Azouzi, Boulabiar, Buskes, Boyd, Ryan, and Snigireva and in turn reveal novel properties of the various maps studied in this paper.
	\end{abstract}
	
	\maketitle
	
	\section{Introduction}\label{S:intro}
	
	Given a nonempty set $X$, the space $\mathbb{R}^X$ of real-valued functions on $X$ is a distributive lattice under the pointwise ordering. For $n\in\mathbb{N}$ and $f_1,...,f_n\in\mathbb{R}^X$, we know that $\bigwedge_{k=1}^nf_k$ is given by the pointwise minimum, and $\bigvee_{k=1}^nf_k$ is specified by the pointwise maximum. More generally, for any $k\in\mathbb{N}$ with $k\leq n$, the function
	\[
	\mathcal{M}_k(f_1,...,f_n)(x)=\Bigl\{\textnormal{the $k$th smallest value of}\ \{f_1(x),...,f_n(x)\}\Bigr\}
	\]
	is also element of $\mathbb{R}^X$. We can naturally refer to the totally ordered subset of $\mathbb{R}^X$
	\[
	\{\mathcal{M}_1(f_1,...,f_n),\mathcal{M}_2(f_1,...,f_n),...,\mathcal{M}_n(f_1,...,f_n)\}
	\]
	as the \textit{total orderization} of $\{f_1,...,f_n\}$.
	
	In \cite[Section~2]{BoyRySniga}, Boyd, Ryan, and Snigireva provide natural extensions of the median operation on a distributive lattice to expand the notion of these $\mathcal{M}_k$ functions, and hence the concept of total orderizations, to Banach lattices. Though they use a different notation than this paper, they essentially define for $k\leq n$, a Banach lattice $E$, and $f_1,...,f_n\in E$
	\[
	\mathcal{M}_k(f_1,...,f_n)=\underset{\substack{1\leq i_1,i_2,...,i_{n+1-k}\leq n \\ i_1<i_2<\cdots<i_{n+1-k}}}{\bigvee}\left(\bigwedge_{j=1}^{n+1-k}f_{i_j}\right).
	\]
	
	For $n=2$ however, the situation is more simple, and the median operation need not be considered. Given a distributive lattice $L$ and $x,y\in L$, the total orderization of $\{x,y\}$ is simply $\{x\wedge y,x\vee y\}$. Perusing through the literature on vector lattices, one can observe that these total orderizations of order two possess some intriguing invariance properties. 
	
	The most elementary of these occurrences is that the sum of two elements remains invariant under total orderizations. Specifically, for a vector lattice $E$ and $f,g\in E$, we know that $f+g=(f\wedge g)+(f\vee g)$.
	
	Interestingly, Bernau and Huijsmans showed in \cite[Proposition~1.13]{BerHui2} that if $A$ is an Archimedean almost $f$-algebra, then the multiplication on $A$ is invariant under total orderizations, that is
	\[
	ab=(a\wedge b)(a\vee b)\quad (a,b\in A).
	\]
	More generally, Kusraev proved in \cite[Proposition~1]{Kus3} that a positive bilinear map $T\colon E\times E\to F$, with $E$ and $F$ Archimedean vector lattices, is orthosymmetric if and only if
	\[
	T(x,y)=T(x\wedge y, x\vee y)\quad (x,y\in E).
	\]
	
	Later, Azouzi, Boulabiar, and Buskes showed that the geometric mean, a symmetric continuous positively homogeneous function, is also invariant under total orderizations. Indeed, if $E$ is an Archimedean geometric mean closed vector lattice and
	\[
	f\boxtimes g:=\frac{1}{2}\inf\{\theta f+\theta^{-1}g\ :\ \theta\in(0,\infty)\}\quad (f,g\in E^+),
	\]
	then it is proven in \cite[Lemma 4.1]{AzBoBus} that
	\[
	f\boxtimes g=(f\wedge g)\boxtimes(f\vee g)\quad (f,g\in E^+).
	\]
	
	Extending beyond $n=2$, Boyd, Ryan, and Snigireva recently extended Kusraev's result above to several variables in \cite[Proposition~2]{BoyRySniga} for regular $n$-linear forms on a Banach lattice. Specifically, they show that if $E$ is a Banach lattice and $A\colon E^n\to\mathbb{R}$ is a regular $n$-linear form, then $A$ is orthosymmetric if and only if
	\[
	A(f_1,...,f_n)=A\Bigl(\mathcal{M}_1(f_1,...,f_n),\mathcal{M}_2(f_1,...,f_n),...,\mathcal{M}_n(f_1,...,f_n)\Bigr)
	\]
	for every $f_1,...,f_n\in E$. Using the terminology of this paper, Boyd, Ryan, and Snigireva prove here that $A$ is orthosymmetric if and only if $A$ is \textit{total orderization invariant}.
	
	In this paper we further investigate the common thread of invariance under total orderizations in these results. We begin by introducing a brief theory for maps on distributive lattices that are total orderization invariant in Section~\ref{S:maps}. Specifically, we show in Proposition~\ref{P:invarimpsym} that every map defined on a distributive lattice that is total orderization invariant is symmetric (though the converse does not hold) and provide in Theorems~\ref{T:genorthosym}\&\ref{T:genorthsteady} two characterizations for total orderization invariant maps defined on a distributive lattice with a smallest element.
	
	In Section~\ref{S:MRs} we then illustrate that total orderization invariant maps generalize the notions of symmetric continuous positively homogeneous functions (Theorem~\ref{T:funcal}), bounded orthosymmetric multilinear maps between a uniformly complete vector lattice and a separated convex bornological space (Theorem~\ref{T:ortho}), and the related and to-be-introduced bounded orthogonally steady power sum polynomials on vector lattices (Theorem~\ref{T:vals}). In the same breath, characterizations for each of these types of maps in terms of total orderizations are provided.  We achieve these results by exploiting not only the theory developed in Section~\ref{S:maps} of this paper but also the theory of functional calculus given in \cite{BusdPvR,BusSch} as well as some results on bounded orthosymmetric maps and orthogonally additive polynomials found in \cite{Kusa2, Kusa}.
	
	The main theorems in this paper generalize the results by Bernau, Huijsmans, Kusraev, Azouzi, Boulabiar, Buskes, Boyd, Ryan, and Snigireva outlined previously in this section. The utility of the theory outlined in this paper can be further seen in Corollary~\ref{C:sum}, Corollary~\ref{C:orthofuncal}, Corollary~\ref{C:steadyfuncal}, Corollary~\ref{C:troitsky}, and Corollary~\ref{C:vals}, where novel properties of these maps under consideration are given.
	
	We assume the reader is familiar with the basics of lattice and vector lattice theory, but the reader is referred to \cite{AB, Birk, LuxZan1, Zan2} for any unexplained terminology or basic facts if necessary. In this paper $\mathbb{N}$ stands for the set of strictly positive integers, while $\mathbb{R}$ denotes the ordered field of real numbers.
	
	\section{Total orderization invariant maps}\label{S:maps}
	
	We begin this section with some convenient notation.
	
	\begin{notation}
		Let $L$ denote a nonempty distributive lattice throughout this section.
	\end{notation}
	
	\begin{notation}
		Given $n\in\mathbb{N}$, we write $[n]:=\{1,...,n\}$ for short in this paper.
	\end{notation}
	
	The median operation on $L$ (see e.g. \cite[Chapter~II, Section 6]{Birk}) is defined for $x_1,x_2,x_3\in L$ by
	\[
	(x_1,x_2,x_3)=(x_1\wedge x_2)\vee(x_1\wedge x_3)\vee(x_2\wedge x_3)=(x_1\vee x_2)\wedge(x_1\vee x_3)\wedge(x_2\vee x_3).
	\]
	Defining for $x_1,x_2,x_3\in L$, and $k\in[3]$
	\[
	\mathcal{M}_k(x_1,x_2,x_3)=\underset{\substack{i_1,i_2,...,i_{4-k}\in[3] \\ i_1<i_2<\cdots<i_{4-k}}}{\bigvee}\left(\bigwedge_{j=1}^{4-k}x_{i_j}\right)=\underset{\substack{i_1,i_2,...,i_{4-k}\in[3] \\ i_1<i_2<\cdots<i_{4-k}}}{\bigwedge}\left(\bigvee_{j=1}^{4-k}x_{i_j}\right)
	\]
	and simplifying these expressions, we get
	\[
	\mathcal{M}_1(x_1,x_2,x_3)=x_1\wedge x_2\wedge x_3,\ \mathcal{M}_2(x_1,x_2,x_3)=(x_1,x_2,x_3),\ \textnormal{and}\
	\mathcal{M}_3(x_1,x_2,x_3)=x_1\vee x_2\vee x_3.
	\]
	
	\begin{remark}\label{R:three}
		It is readily checked that
		\begin{itemize}
			\item[$(i)$] $\mathcal{M}_k$ is symmetric for each $k\in[3]$,
			\item[$(ii)$] if $x_1,x_2,x_3\in L$ and $x_1\leq x_2\leq x_3$, then $\mathcal{M}_k=x_k$ for every $k\in[3]$, and
			\item[$(iii)$] $\mathcal{M}_1(x_1,x_2,x_3)\leq\mathcal{M}_2(x_1,x_2,x_3)\leq\mathcal{M}_3(x_1,x_2,x_3)$ for all $x_1,x_2,x_3\in L$.
		\end{itemize}
	\end{remark}
	
	These $\mathcal{M}_k$ operations naturally extend to any finite number of variables. As mentioned in the introduction, they appear in \cite[Section~2]{BoyRySniga} under a slightly different notation.
	
	\begin{definition}\label{D:ot}
		For $n\in\mathbb{N}$, $x_1,...,x_n\in L$, and $k\in[n]$ let
		\[
		\mathcal{M}_k(x_1,...,x_n)=\underset{\substack{i_1,i_2,...,i_{n+1-k}\in[n] \\ i_1<i_2<\cdots<i_{n+1-k}}}{\bigvee}\left(\bigwedge_{j=1}^{n+1-k}x_{i_j}\right)=\underset{\substack{i_1,i_2,...,i_{n+1-k}\in[n] \\ i_1<i_2<\cdots<i_{n+1-k}}}{\bigwedge}\left(\bigvee_{j=1}^{n+1-k}x_{i_j}\right).
		\]
	\end{definition}
	
	In Proposition~\ref{P:3ton} we show that the contents of Remark~\ref{R:three} extend to any finite number of variables. The author suspects this result is possibly known but was unable to locate a reference.
	
	\begin{proposition}\label{P:3ton}
		Let $n\in\mathbb{N}$. Then
		\begin{itemize}
			\item[$(i)$] $\mathcal{M}_k\colon L^n\to L$ is a symmetric function for all $k\in[n]$,
			\item[$(ii)$] if $x_1,x_2,...,x_n\in L$ satisfy $x_1\leq x_2\leq\cdots\leq x_n$, then $\mathcal{M}_k(x_1,...,x_n)=x_k$ for all $k\in[n]$, and
			\item[$(iii)$] $\mathcal{M}_1(x_1,...,x_n)\leq\mathcal{M}_2(x_1,...,x_n)\leq\cdots\leq\mathcal{M}_n(x_1,...,x_n)$ for all $x_1,x_2,...,x_n\in L$.
		\end{itemize}
	\end{proposition}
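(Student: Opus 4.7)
The plan is to recast Definition~\ref{D:ot} in slightly more convenient combinatorial language and then verify each of (i), (ii), (iii) from that reformulation. The key observation is that an increasing tuple $(i_1 < i_2 < \cdots < i_{n+1-k})$ of elements of $[n]$ is exactly the same data as a subset $S \subseteq [n]$ of cardinality $n+1-k$, so I will work with the form
\[
\mathcal{M}_k(x_1,\dots,x_n) = \bigvee_{\substack{S \subseteq [n] \\ |S| = n+1-k}} \bigwedge_{i \in S} x_i.
\]
Only this ``join of meets'' half of the definition is needed; the dual expression plays no role in the proposition.

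For \emph{(i)}, any permutation $\sigma$ of $[n]$ induces a bijection $S \mapsto \sigma^{-1}(S)$ on the collection of size-$(n+1-k)$ subsets of $[n]$, so relabeling the arguments by $\sigma$ merely reindexes the outer join and leaves the value unchanged. For \emph{(ii)}, under $x_1 \leq \cdots \leq x_n$, each inner meet collapses to $\bigwedge_{i \in S} x_i = x_{\min S}$, and a short counting argument shows $\min S \leq k$ always: otherwise $S \subseteq \{k+1,\dots,n\}$, a set of only $n-k$ elements. The upper bound $\min S = k$ is attained by $S = \{k,k+1,\dots,n\}$, so the outer join equals $x_k$. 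For \emph{(iii)}, fix $k \in [n-1]$ and any $S$ of size $n+1-k$; form $T = S \setminus \{i_0\}$ for an arbitrary $i_0 \in S$, so $|T| = n-k$ and
\[
\bigwedge_{i \in S} x_i \;\leq\; \bigwedge_{i \in T} x_i \;\leq\; \mathcal{M}_{k+1}(x_1,\dots,x_n).
\]
Taking the join over all such $S$ yields $\mathcal{M}_k(x_1,\dots,x_n) \leq \mathcal{M}_{k+1}(x_1,\dots,x_n)$.

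Nothing here is delicate: once the definition is rewritten in terms of cardinality-$(n+1-k)$ subsets of $[n]$, each claim reduces to a couple of lines. If there is an obstacle worth naming it is only the brief pigeonhole bookkeeping in (ii) needed to confirm both the bound $\min S \leq k$ and the attainment of equality; everything else is immediate from the lattice operations and from the fact that the indexing set depends only on the cardinality $n+1-k$.
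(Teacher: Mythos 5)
Your proof is correct, and for parts (ii) and (iii) it follows essentially the same route as the paper: in (ii) you both observe that each inner meet reduces to $x_{\min S}$ with $\min S \leq k$ and equality attained at $S=\{k,\dots,n\}$ (the paper writes this tersely as $\bigvee\bigwedge x_{i_j}=\bigwedge_{i=k}^n x_i=x_k$), and in (iii) you both use the same monotonicity-of-meets argument obtained by deleting one index from the larger subset. Where you genuinely diverge is part (i). The paper reduces to a single transposition (swapping $x_1$ and $x_2$) and then verifies invariance by an explicit three-way decomposition $\mathcal{M}_k = R\vee S\vee T$, where the cases are organized according to whether a given size-$(n+1-k)$ subset contains both of $\{1,2\}$, exactly one, or neither; each block is checked to be symmetric in the two swapped variables. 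You instead recast $\mathcal{M}_k$ directly as a join over all cardinality-$(n+1-k)$ subsets $S\subseteq[n]$ and observe that any permutation $\sigma$ of $[n]$ permutes this family of subsets bijectively, so the outer join is merely reindexed. This handles an arbitrary $\sigma$ in one stroke, avoids the case analysis entirely, and sidesteps the implicit appeal to transpositions generating $S_n$; it is the cleaner argument, and in fact it is the same reformulation that already powers your (ii) and (iii), so the whole proof becomes uniform. The paper's version has the modest advantage of staying in the exact index notation of Definition~\ref{D:ot}, but at the cost of bookkeeping that your subset viewpoint eliminates.
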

	
	\begin{proof}
		$(i)$ Fix $k\in[n]$, and let $x_1,x_2,...,x_n\in L$ be arbitrary. It suffices to show that for any $i,j\in[n]$ with $i<j$ we have
		\[
		\mathcal{M}_k(x_1,...,x_i,...,x_j,...,x_n)=\mathcal{M}_k(x_1,...,x_j,...,x_i,...,x_n).
		\]
		In order to minimize cumbersome notation, we will show that
		\[
		\mathcal{M}_k(x_1,x_2,...,x_n)=\mathcal{M}_k(x_2,x_1,...,x_n),
		\]
		noting that the more general proof is similar. For $n=1$ the result is trivial, and for $n=2$ the result is known since $\mathcal{M}_1(x_1,x_2)=x_1\wedge x_2$ and $\mathcal{M}_2(x_1,x_2)=x_1\vee x_2$. Assuming that $n\geq 3$, observe that
		\[
		\mathcal{M}_k(x_1,x_2,...,x_n)=R(x_1,x_2,...,x_n)\vee S(x_1,x_2,...,x_n)\vee T(x_1,x_2,...,x_n),
		\]
		where
		\[
		R(x_1,x_2,...,x_n)=\underset{\substack{3\leq q_1,q_2,...,q_{n-1-k}\leq n \\ q_1<q_2<\cdots<q_{n-1-k}}}{\bigvee}(x_1\wedge x_2\wedge x_{q_1}\wedge\cdots\wedge x_{q_{n-1-k}}),
		\]
		and
		\[
		S(x_1,x_2,...,x_n)=
		\]
		\[
		\left(\underset{\substack{3\leq r_1,r_2,...,r_{n-k}\leq n \\ r_1<r_2<\cdots<r_{n-k}}}{\bigvee}(x_1\wedge x_{r_1}\wedge\cdots\wedge x_{r_{n-k}})\right)\bigvee\left(\underset{\substack{3\leq s_1,s_2,...,s_{n-k}\leq n \\ s_1<s_2<\cdots<s_{n-k}}}{\bigvee}(x_2\wedge x_{s_1}\wedge\cdots\wedge x_{s_{n-k}})\right),
		\]
		and
		\[
		T(x_1,x_2,...,x_n)=\underset{\substack{4\leq t_1,t_2,...,t_{n-k}\leq n \\ t_1<t_2<\cdots<t_{n-k}}}{\bigvee}(x_3\wedge x_{t_1}\wedge\cdots\wedge x_{t_{n-k}}).
		\]
		Clearly, we see that $R(x_1,x_2,...,x_n)=R(x_2,x_1,...,x_n),\quad  S(x_1,x_2,...,x_n)=S(x_2,x_1,...,x_n)$, and $T(x_1,x_2,...,x_n)=T(x_2,x_1,...,x_n)$. It follows that
		\[
		\mathcal{M}_k(x_1,x_2,...,x_n)=\mathcal{M}_k(x_2,x_1,...,x_n).
		\]
		We conclude that $\mathcal{M}_k$ is symmetric.
		
		$(ii)$ Suppose $x_1,x_2,...,x_n\in L$ satisfy $x_1\leq x_2\leq\cdots\leq x_n$, and let $k\in[n]$ be arbitrary. Then we obtain
		\[
		\mathcal{M}_k(x_1,...,x_n)=\underset{\substack{i_1,i_2,...,i_{n+1-k}\in[n] \\ i_1<i_2<\cdots<i_{n+1-k}}}{\bigvee}\left(\bigwedge_{j=1}^{n+1-k}x_{i_j}\right)=\bigwedge_{i=k}^{n}x_{i}=x_k.
		\]
		
		$(iii)$ For $n=1$, the result is trivial. Suppose $n\geq 2$ and fix $k\in[n-1]$. We prove that $\mathcal{M}_k(x_1,...,x_n)\leq\mathcal{M}_{k+1}(x_1,...,x_n)$, that is, that
		\begin{align*}
			\underset{\substack{i_1,i_2,...,i_{n+1-k}\in[n] \\ i_1<i_2<\cdots<i_{n+1-k}}}{\bigvee}\left(\bigwedge_{j=1}^{n+1-k}x_{i_j}\right)\leq\underset{\substack{t_1,t_2,...,t_{n-k}\in[n] \\ t_1<t_2<\cdots<t_{n-k}}}{\bigvee}\left(\bigwedge_{l=1}^{n-k}x_{t_l}\right).
		\end{align*}
		To this end, let $i_1,i_2,...,i_{n+1-k}\in[n]$ with $i_1<i_2<\cdots<i_{n+1-k}$ arbitrary. Clearly, there exist $t_1,t_2,...,t_{n-k}\in[n]$ with $t_1<t_2<\cdots<t_{n-k}$ for which 
		\[
		\{t_1,t_2,...,t_{n-k}\}\subseteq\{i_1,i_2,...,i_{n+1-k}\}.
		\]
		By the monotonicity of infima, we obtain
		\[
		\bigwedge_{j=1}^{n+1-k}x_{i_j}\leq\bigwedge_{l=1}^{n-k}x_{t_l}\leq\underset{\substack{t_1,t_2,...,t_{n-k}\in[n] \\ t_1<t_2<\cdots<t_{n-k}}}{\bigvee}\left(\bigwedge_{l=1}^{n-k}x_{t_l}\right).
		\]
		Since the inequality
		\[
		\bigwedge_{j=1}^{n+1-k}x_{i_j}\leq\underset{\substack{t_1,t_2,...,t_{n-k}\in[n] \\ t_1<t_2<\cdots<t_{n-k}}}{\bigvee}\left(\bigwedge_{l=1}^{n-k}x_{t_l}\right)
		\]
		holds for all $i_1,i_2,...,i_{n+1-k}\in[n]$ with $i_1<i_2<\cdots<i_{n+1-k}$, we conclude that
		\begin{align*}
			\underset{\substack{i_1,i_2,...,i_{n+1-k}\in[n] \\ i_1<i_2<\cdots<i_{n+1-k}}}{\bigvee}\left(\bigwedge_{j=1}^{n+1-k}x_{i_j}\right)\leq\underset{\substack{t_1,t_2,...,t_{n-k}\in[n] \\ t_1<t_2<\cdots<t_{n-k}}}{\bigvee}\left(\bigwedge_{l=1}^{n-k}x_{t_l}\right).
		\end{align*}
		It follows that
		\[
		\mathcal{M}_1(x_1,...,x_n)\leq\mathcal{M}_2(x_1,...,x_n)\leq\cdots\leq\mathcal{M}_n(x_1,...,x_n).
		\]
	\end{proof}
	
	The following terminology is motivated by Proposition~\ref{P:3ton}$(iii)$ above.
	
	\begin{definition}
		Given $n\in\mathbb{N}$ and $x_1,...,x_n\in L$, we write
		\[
		to(\{x_1,...,x_n\}):=\{\mathcal{M}_1(x_1,...,x_n),\mathcal{M}_2(x_1,...,x_n),\dots,\mathcal{M}_n(x_1,...,x_n)\}
		\]
		and call the totally ordered set $to(\{x_1,...,x_n\})$ the \textit{total orderization} of $\{x_1,...,x_n\}$.
	\end{definition}
	
	\begin{remark}\label{R:totordsetssame}
		It is an immediate consequence of Proposition~\ref{P:3ton} that if $\{x_1,...,x_n\}\subseteq L$, then $\{x_1,...,x_n\}$ is totally ordered if and only if
		\[
		to(\{x_1,...,x_n\})=\{x_1,...,x_n\}.
		\]
	\end{remark}
	
	We next provide a formal definition of maps on distributive lattices that are invariant under total orderizations. These functions constitute our primary focus throughout the rest of the paper.
	
	\begin{definition}
		Let $n\in\mathbb{N}$, and suppose $A$ is a nonempty set. A map $T\colon L^n\to A$ is said to be \textit{total orderization invariant} if
		\[
		T(x_1,...,x_n)=T\Bigl(\mathcal{M}_1(x_1,...,x_n),\mathcal{M}_2(x_1,...,x_n),\dots,\mathcal{M}_n(x_1,...,x_n)\Bigr)
		\]
		holds for every $x_1,...,x_n\in L$.
	\end{definition}
	
	Our first result regarding these maps is useful despite its simplicity.
	
	\begin{proposition}\label{P:invarimpsym}
		Fix $n\in\mathbb{N}$. Let $A$ be a nonempty set. If a map $T\colon L^n\to A$ is total orderization invariant, then $T$ is symmetric.
	\end{proposition}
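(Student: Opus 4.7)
The plan is to exploit Proposition~\ref{P:3ton}$(i)$, which tells us that each coordinate function $\mathcal{M}_k$ is itself symmetric in its $n$ arguments. Since total orderization invariance of $T$ means that $T(x_1,\ldots,x_n)$ depends on $(x_1,\ldots,x_n)$ only through the tuple $\bigl(\mathcal{M}_1(x_1,\ldots,x_n),\ldots,\mathcal{M}_n(x_1,\ldots,x_n)\bigr)$, and that tuple is invariant under every permutation of the inputs, $T$ must be symmetric.

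Concretely, I would fix $x_1,\ldots,x_n \in L$ and an arbitrary permutation $\sigma$ of $[n]$. Applying total orderization invariance to the permuted tuple yields
\[
T(x_{\sigma(1)},\ldots,x_{\sigma(n)}) = T\Bigl(\mathcal{M}_1(x_{\sigma(1)},\ldots,x_{\sigma(n)}),\ldots,\mathcal{M}_n(x_{\sigma(1)},\ldots,x_{\sigma(n)})\Bigr).
\]
By Proposition~\ref{P:3ton}$(i)$, $\mathcal{M}_k(x_{\sigma(1)},\ldots,x_{\sigma(n)}) = \mathcal{M}_k(x_1,\ldots,x_n)$ for every $k\in[n]$, so the right-hand side equals $T\bigl(\mathcal{M}_1(x_1,\ldots,x_n),\ldots,\mathcal{M}_n(x_1,\ldots,x_n)\bigr)$, which in turn equals $T(x_1,\ldots,x_n)$ by a second application of total orderization invariance. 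Hence $T(x_{\sigma(1)},\ldots,x_{\sigma(n)}) = T(x_1,\ldots,x_n)$.

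There is no real obstacle here; the proof is essentially a two-line application of Proposition~\ref{P:3ton}$(i)$ sandwiched between two uses of the definition of total orderization invariance. The only minor care needed is in the bookkeeping: one must apply the invariance property to \emph{both} the original and the permuted tuple so that the shared value $T\bigl(\mathcal{M}_1(x_1,\ldots,x_n),\ldots,\mathcal{M}_n(x_1,\ldots,x_n)\bigr)$ serves as the bridge between $T(x_1,\ldots,x_n)$ and $T(x_{\sigma(1)},\ldots,x_{\sigma(n)})$.
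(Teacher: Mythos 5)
Your proof is correct and takes essentially the same approach as the paper: both arguments sandwich the symmetry of the $\mathcal{M}_k$ operations (Proposition~\ref{P:3ton}$(i)$) between two applications of the definition of total orderization invariance, yielding the same chain of equalities.
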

	
	\begin{proof}
		Suppose $T\colon L^n\to A$ is total orderization invariant. Let $x_1,...,x_n\in L$. By assumption we have
		\[
		T(x_1,...,x_n)=T\Bigl(\mathcal{M}_1(x_1,...,x_n),\mathcal{M}_2(x_1,...,x_n),\dots,\mathcal{M}_n(x_1,...,x_n)\Bigr).
		\]
		Also, $\mathcal{M}_k$ is symmetric for each $k\in[n]$ by Proposition~\ref{P:3ton}$(i)$. Thus for any permutation $\sigma$ on $[n]$ we obtain
		\begin{align*}
			T&(x_1,...,x_n)=T\Bigl(\mathcal{M}_1(x_1,...,x_n),\mathcal{M}_2(x_1,...,x_n),\dots,\mathcal{M}_n(x_1,...,x_n)\Bigr)\\
			&=T\Biggl(\mathcal{M}_1\Bigl(x_{\sigma(1)},...,x_{\sigma(n)}\Bigr),\mathcal{M}_2\Bigl(x_{\sigma(1)},...,x_{\sigma(n)}\Bigr),\dots,\mathcal{M}_n\Bigl(x_{\sigma(1)},...,x_{\sigma(n)}\Bigr)\Biggr)\\
			&=T\Bigl(x_{\sigma(1)},...,x_{\sigma(n)}\Bigr).
		\end{align*}
		This concludes the proof.
	\end{proof}
	
	The converse of Proposition~\ref{P:invarimpsym} does not hold, as the following counterexample illustrates.
	
	\begin{example}
		Consider the Banach lattice $C[0,1]$ under the supremum norm. Define $T\colon C[0,1]\to\mathbb{R}$ by $T(f,g)=\|f\|+\|g\|$. Clearly, $T$ is symmetric. However, for $f,g\in C[0,1]$ defined by $f(x)=1-x$ and $g(x)=x$, we have $T(f,g)=2$ and $T(f\wedge g,f\vee g)=\frac{3}{2}$. Thus $T$ is not total orderization invariant.
	\end{example}
	
	We next proceed to the main results of this section. Theorem~\ref{T:genorthosym} below will be employed in Theorem~\ref{T:ortho} to obtain a characterization of bounded orthosymmetric multilinear maps on vector lattices.
	
	\begin{theorem}\label{T:genorthosym}
		Fix $n\in\mathbb{N}$. Suppose $L$ possesses a smallest element $\theta$, and let $A$ be a nonempty set. Put $a\in A$, and assume $T\colon L^n\to A$ is total orderization invariant. The following are equivalent.
		\begin{itemize}
			\item[$(i)$] $T(x_1,...,x_n)=a$ whenever $x_1,...,x_n\in L$ and $x_i\wedge x_j=\theta$ for some $i,j\in[n]$, and
			\item[$(ii)$] $T(\theta,x_1,...,x_{n-1})=a$ holds for all $x_1,...,x_{n-1}\in L$.
		\end{itemize}
	\end{theorem}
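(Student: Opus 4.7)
\medskip

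\noindent\textbf{Proof proposal.} The plan is to handle the two implications separately, with $(i)\Rightarrow(ii)$ being immediate and $(ii)\Rightarrow(i)$ proceeding by first applying total orderization invariance to collapse the hypothesis $x_i\wedge x_j=\theta$ into the statement that the smallest coordinate of the reordered tuple is $\theta$.

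For $(i)\Rightarrow(ii)$, I would simply observe that for any $x_1,\dots,x_{n-1}\in L$, the tuple $(\theta,x_1,\dots,x_{n-1})$ satisfies the hypothesis of $(i)$, since the first coordinate $\theta$ meets any other coordinate at $\theta$ (taking indices $1$ and $2$, or $i=j=1$ if $n=1$). Hence $(i)$ directly yields $T(\theta,x_1,\dots,x_{n-1})=a$.

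For $(ii)\Rightarrow(i)$, suppose $x_1,\dots,x_n\in L$ satisfy $x_i\wedge x_j=\theta$ for some $i,j\in[n]$. Total orderization invariance rewrites
\[
T(x_1,\dots,x_n)=T\bigl(\mathcal{M}_1(x_1,\dots,x_n),\mathcal{M}_2(x_1,\dots,x_n),\dots,\mathcal{M}_n(x_1,\dots,x_n)\bigr).
\]
From Definition~\ref{D:ot} with $k=1$, $\mathcal{M}_1(x_1,\dots,x_n)=\bigwedge_{l=1}^n x_l$, and since the full meet is bounded above by the pairwise meet $x_i\wedge x_j=\theta$ and also bounded below by $\theta$, we conclude $\mathcal{M}_1(x_1,\dots,x_n)=\theta$. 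Substituting and invoking $(ii)$ with $x_l'=\mathcal{M}_{l+1}(x_1,\dots,x_n)$ for $l\in[n-1]$ gives $T(x_1,\dots,x_n)=a$.

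There is essentially no obstacle here beyond recognizing the role of $\mathcal{M}_1$ as the global meet; total orderization invariance provides exactly the bridge that turns a single pairwise orthogonality relation into an orthogonality with $\theta$ in the first coordinate, at which point $(ii)$ applies verbatim. The cases $n=1$ and $i=j$, if one wishes to include them, reduce trivially to the statement $T(\theta)=a$ (using the symmetry of $T$ guaranteed by Proposition~\ref{P:invarimpsym} when invoking $(ii)$ with $\theta$ in an arbitrary slot).
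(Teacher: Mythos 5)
Your proof is correct and follows essentially the same approach as the paper: $(i)\Rightarrow(ii)$ is immediate from $\theta$ being the smallest element, and $(ii)\Rightarrow(i)$ uses total orderization invariance together with the observation that $\mathcal{M}_1(x_1,\dots,x_n)=\bigwedge_{k=1}^n x_k=\theta$. Your additional remarks on the $n=1$ and $i=j$ cases are harmless bookkeeping the paper leaves implicit.
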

	
	\begin{proof}
		The implication $(i)\implies(ii)$ is evident since $\theta$ is the smallest element of $L$. In order to prove $(ii)\implies(i)$, suppose that $T(\theta,x_1,...,x_{n-1})=a$ for every $x_1,...,x_{n-1}\in L$. To show that $(i)$ holds, let $x_1,...,x_n\in L$ be such that $x_i\wedge x_j=\theta$ for some $i,j\in[n]$. Then we have
		\[
		\mathcal{M}_1(x_1,...,x_n)=\bigwedge_{k=1}^nx_k=\theta.
		\]
		Hence from assumption $(ii)$ we obtain
		\begin{align*}
			T(x_1,...,x_n)&=T\Bigl(\mathcal{M}_1(x_1,...,x_n),\mathcal{M}_2(x_1,...,x_n),\dots,\mathcal{M}_n(x_1,...,x_n)\Bigr)\\
			&=T\Bigl(\theta,\mathcal{M}_2(x_1,...,x_n),\dots,\mathcal{M}_n(x_1,...,x_n)\Bigr)=a.
		\end{align*}
	\end{proof}
	
	We conclude this section with the following essential tool for the proof of Theorem~\ref{T:vals}.
	
	\begin{theorem}\label{T:genorthsteady}
		Fix $n\in\mathbb{N}$.  Suppose $L$ possesses a smallest element $\theta$, and let $A$ be a nonempty set. Let $\phi\colon L\to A$ be an arbitrary function, and assume $T\colon L^n\to A$ is total orderization invariant. The following are equivalent.
		\begin{itemize}
			\item[$(i)$] $T(x_1,...,x_n)=\phi\left(\bigvee_{k=1}^nx_k\right)$ whenever $x_1,...,x_n\in L$ satisfy $x_i\wedge x_j=\theta$ for all $i,j\in[n]$ with $i\neq j$, and
			\item[$(ii)$] $T(\underbrace{\theta,\dots,\theta}_{n-1\ \text{copies}},x)=\phi(x)$ holds for all $x\in L$.
		\end{itemize}
	\end{theorem}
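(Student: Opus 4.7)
The plan is to prove the two implications separately. The direction $(i)\Rightarrow(ii)$ will be a direct specialization, while $(ii)\Rightarrow(i)$ requires computing the total orderization of a tuple of pairwise disjoint elements and then invoking the hypothesis that $T$ is total orderization invariant.

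For $(i)\Rightarrow(ii)$, given $x\in L$ I would set $x_1=\cdots=x_{n-1}=\theta$ and $x_n=x$. Since $\theta$ is the smallest element of $L$, any meet $x_i\wedge x_j$ with $i\neq j$ involves at least one copy of $\theta$ and hence equals $\theta$, so the pairwise disjointness hypothesis of $(i)$ is satisfied. The join is $\bigvee_{k=1}^n x_k=x$, so applying $(i)$ gives $T(\theta,\dots,\theta,x)=\phi(x)$.

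For $(ii)\Rightarrow(i)$, the crux is the following observation about pairwise disjoint tuples. Suppose $x_1,\dots,x_n\in L$ satisfy $x_i\wedge x_j=\theta$ whenever $i\neq j$. I would show that
\[
\mathcal{M}_k(x_1,\dots,x_n)=\theta\quad\text{for every }k\in[n-1],\qquad \mathcal{M}_n(x_1,\dots,x_n)=\bigvee_{k=1}^n x_k.
\]
The first identity follows from Definition~\ref{D:ot}: when $k\leq n-1$ we have $n+1-k\geq 2$, so each inner infimum $\bigwedge_{j=1}^{n+1-k}x_{i_j}$ involves at least two of the $x_i$'s and is therefore bounded above by $x_{i_1}\wedge x_{i_2}=\theta$; the whole supremum collapses to $\theta$. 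The second identity is simply the case $k=n$ of Definition~\ref{D:ot}, where $n+1-k=1$ reduces each inner infimum to a single $x_{i_1}$.

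Once these two identities are in hand, the conclusion is immediate: by total orderization invariance of $T$ followed by hypothesis $(ii)$,
\[
T(x_1,\dots,x_n)=T\Bigl(\mathcal{M}_1(x_1,\dots,x_n),\dots,\mathcal{M}_n(x_1,\dots,x_n)\Bigr)=T\bigl(\underbrace{\theta,\dots,\theta}_{n-1},\bigvee_{k=1}^n x_k\bigr)=\phi\Bigl(\bigvee_{k=1}^n x_k\Bigr),
\]
which is exactly $(i)$. I do not anticipate any real obstacle; the only point requiring care is the bookkeeping in the computation of $\mathcal{M}_k$ on pairwise disjoint inputs, which rests entirely on the observation that any meet of two or more pairwise disjoint elements equals $\theta$.
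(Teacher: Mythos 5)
Your proof is correct and follows essentially the same approach as the paper. The only minor difference is in showing $\mathcal{M}_k(x_1,\dots,x_n)=\theta$ for all $k\in[n-1]$: you bound each inner infimum $\bigwedge_{j=1}^{n+1-k}x_{i_j}$ directly by $x_{i_1}\wedge x_{i_2}=\theta$ for every such $k$, whereas the paper computes only $\mathcal{M}_{n-1}(x_1,\dots,x_n)=\theta$ and then invokes the monotonicity $\mathcal{M}_1\leq\cdots\leq\mathcal{M}_n$ from Proposition~\ref{P:3ton}$(iii)$ to cover the remaining indices; both arguments are equally valid and equally short.
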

	
	\begin{proof}
		The implication $(i)\implies(ii)$ is trivial. In order to verify $(ii)\implies(i)$, assume that $T(\underbrace{\theta,\dots,\theta}_{n-1\ \text{copies}},x)=\phi(x)$ holds for each $x\in L$. Let $x_1,...,x_n\in L$ satisfy $x_i\wedge x_j=\theta$ for all $i,j\in[n]$ with $i\neq j$. Then
		\[
		\mathcal{M}_{n-1}(x_1,...,x_n)=\underset{\substack{i_1,i_2\in[n]\\ i_1<i_2}}{\bigvee}\left(\bigwedge_{j=1}^{2}x_{i_j}\right)=\theta.
		\]
		It then follows from Proposition~\ref{P:3ton}$(iii)$ that $\mathcal{M}_{k}(x_1,...,x_n)=\theta$ for all $k\in[n-1]$. Thus we attain
		\begin{align*}
			T(x_1,...,x_n)&=T\Bigl(\mathcal{M}_1(x_1,...,x_n),\mathcal{M}_2(x_1,...,x_n),\dots,\mathcal{M}_n(x_1,...,x_n)\Bigr)\\
			&=T\left(\underbrace{\theta,\dots,\theta}_{n-1\ \text{copies}},\bigvee_{k=1}^nx_k\right)\\
			&=\phi\left(\bigvee_{k=1}^nx_k\right).
		\end{align*}
	\end{proof}
	
	\section{Total orderization invariant maps on vector lattices}\label{S:MRs}
	
	In this final section of the paper, we consider Archimedean real vector lattices instead of the more general distributive lattices. The primary aim of this section is to illustrate that symmetric continuous positively homogeneous functions, bounded orthosymmetric multilinear maps, and what we call bounded orthogonally steady power sum polynomials are all total orderization invariant. In fact, we prove that invariance under total orderizations actually characterizes these maps and reveal some new properties of these functions along the way.
	
	\begin{notation}
		Throughout the remainder of this paper, $E$ denotes an Archimedean real vector lattice that is nontrivial; i.e. $E\neq\{0\}$.
	\end{notation}
	
	We extensively utilize the Archimedean vector lattice functional calculus introduced by Buskes, de Pagter, and van Rooij in \cite[Section~3]{BusdPvR}. Using the notation in \cite{BusdPvR}, we denote by $\mathcal{H}(\mathbb{R}^{n})$ the space of all continuous, real-valued functions $h$ on $\mathbb{R}^{n}$ that are \textit{positively homogeneous}, i.e. $h(\lambda x)=\lambda h(x)$ for every $\lambda
	\in \mathbb{R}^{+}$ and all $x\in \mathbb{R}^{n}$.
	
	\begin{definition}\cite[Definition~3.1]{BusdPvR}
		Given $f_{1},...,f_{n},g\in E$ and $h\in\mathcal{H}(\mathbb{R}^{n})$, we write $h(f_{1},...,f_{n})=g$ when
		\[
		h(\omega
		(f_{1}),...,\omega (f_{n}))=\omega(g)
		\]
		holds for every nonzero real-valued vector lattice homomorphism $\omega$ defined on the vector sublattice of $E$ generated by $\{f_{1},...,f_{n},g\}$.
	\end{definition}
	
	This functional calculus cannot always be defined in certain vector lattices however. For example, if $h(x,y)=\sqrt{x^2+y^2}$, then there exists $f$ and $g$ in the Fremlin tensor product $C[0,1]\bar{\otimes}C[0,1]$ such that $h(f,g)$ is undefined (see \cite[Theorem 4.10]{BusSch2}). We thus turn to the notion of $h$-completeness, as introduced in \cite{BusSch}.
	
	\begin{definition}\cite[Definition~3.2]{BusSch}
		For $h\in\mathcal{H}(\mathbb{R}^{n})$,
		we say that $E$ is $h$-\textit{complete} if for every $f_{1},...,f_{n}\in E$ there exists $g\in E$ such that $h(f_{1},...,f_{n})=g$. 
	\end{definition}
	
	\begin{remark}
		By \cite[Theorem~3.7]{BusdPvR}, if $E$ is uniformly complete, then $E$ is $h$-complete for every $h\in\bigcup_{n\in\mathbb{N}}\mathcal{H}(\mathbb{R}^n)$.
	\end{remark}
	
	We next show that symmetric continuous positively homogeneous functions $h\colon E^n\to E$ defined via the Archimedean functional calculus are total orderization invariant. As a matter of fact we prove that invariance under total orderizations characterizes the symmetry of these maps. This result is required for our proof of Theorem~\ref{T:vals}.
	
	\begin{theorem}\label{T:funcal}
		Fix $n\in\mathbb{N}$. Let $h\in\mathcal{H}(\mathbb{R}^n)$, and assume that $E$ is $h$-complete. The following are equivalent.
		\begin{itemize}
			\item[$(i)$] $h\colon\mathbb{R}^n\to\mathbb{R}$ is symmetric.
			\item[$(ii)$] The function $h\colon E^n\to E$ defined via the Archimedean functional calculus is symmetric.
			\item[$(iii)$] The function $h\colon E^n\to E$ defined via the Archimedean functional calculus is total orderization invariant.
		\end{itemize}
	\end{theorem}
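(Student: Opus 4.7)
The plan is to prove the circle of implications $(iii)\Rightarrow(ii)\Rightarrow(i)\Rightarrow(iii)$. The first implication is immediate from Proposition~\ref{P:invarimpsym}, so the substantive work lies in $(ii)\Rightarrow(i)$ and $(i)\Rightarrow(iii)$.

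For $(ii)\Rightarrow(i)$, the plan is to transfer symmetry from $E^n$ back down to $\mathbb{R}^n$ by embedding $\mathbb{R}$ into $E$ as a principal ideal. Since $E$ is nontrivial, I would fix some $f\in E^+$ with $f\neq 0$, observe that $F:=\{\lambda f:\lambda\in\mathbb{R}\}$ is a vector sublattice of $E$ that is order-isomorphic to $\mathbb{R}$ via $\omega(\lambda f)=\lambda$, and note that $F$ is uniformly complete, hence $h$-complete. Given $\lambda_{1},\dots,\lambda_{n}\in\mathbb{R}$, the element $h(\lambda_{1}f,\dots,\lambda_{n}f)$ must then lie in $F$; applying the homomorphism $\omega$ to the defining relation of the Archimedean functional calculus identifies this element as $h(\lambda_{1},\dots,\lambda_{n})f$. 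Now if the induced map on $E^n$ is symmetric, then for every permutation $\sigma$ of $[n]$,
\[
h(\lambda_{1},\dots,\lambda_{n})f=h(\lambda_{1}f,\dots,\lambda_{n}f)=h(\lambda_{\sigma(1)}f,\dots,\lambda_{\sigma(n)}f)=h(\lambda_{\sigma(1)},\dots,\lambda_{\sigma(n)})f,
\]
and cancelling the nonzero $f$ yields symmetry of $h$ on $\mathbb{R}^{n}$.

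For $(i)\Rightarrow(iii)$, the plan is to produce the desired identity in $E$ from the corresponding trivial identity in $\mathbb{R}$. For each $k\in[n]$, define $h_{k}\in\mathcal{H}(\mathbb{R}^{n})$ by $h_{k}(\lambda_{1},\dots,\lambda_{n})=\mathcal{M}_{k}(\lambda_{1},\dots,\lambda_{n})$, which is continuous and positively homogeneous because $\mathcal{M}_{k}$ is built from $\wedge$ and $\vee$. Set $H:=h\circ(h_{1},\dots,h_{n})\in\mathcal{H}(\mathbb{R}^{n})$. By Proposition~\ref{P:3ton}$(ii)$ applied in $\mathbb{R}$, the tuple $(h_{1}(\lambda),\dots,h_{n}(\lambda))$ is simply $(\lambda_{1},\dots,\lambda_{n})$ sorted in increasing order; since $h$ is symmetric, it follows that $H=h$ as functions on $\mathbb{R}^{n}$. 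Two invocations of the Archimedean functional calculus then close the argument: by uniqueness the functional calculus element $h_{k}(f_{1},\dots,f_{n})$ coincides with the lattice-theoretic $\mathcal{M}_{k}(f_{1},\dots,f_{n})$, and the composition rule for the functional calculus from \cite{BusdPvR} together with $H=h$ yields
\[
h(f_{1},\dots,f_{n})=H(f_{1},\dots,f_{n})=h\Bigl(\mathcal{M}_{1}(f_{1},\dots,f_{n}),\dots,\mathcal{M}_{n}(f_{1},\dots,f_{n})\Bigr),
\]
which is exactly total orderization invariance.

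I expect the main technical obstacle to be the bookkeeping in $(i)\Rightarrow(iii)$: one must cite the correct composition/substitution principle for the Buskes--de Pagter--van Rooij functional calculus and verify that the $h$-completeness hypothesis on $E$ indeed ensures that each of the intermediate expressions $h_{k}(f_{1},\dots,f_{n})$ and $H(f_{1},\dots,f_{n})$ actually represents an element of $E$. Everything else is essentially a translation between a pointwise identity on $\mathbb{R}^{n}$ and its image under the functional calculus.
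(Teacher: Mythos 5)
Your cycle of implications matches the paper's and the first two legs track the paper's proof essentially exactly. For $(iii)\Rightarrow(ii)$ you both invoke Proposition~\ref{P:invarimpsym}. For $(ii)\Rightarrow(i)$ you both push down to the one-dimensional sublattice generated by a single nonzero element and use the commutation of homomorphisms with functional calculus (\cite[Theorem~3.11]{BusSch}) to transfer symmetry from $E^n$ to $\mathbb{R}^n$; your explicit identification $h(\lambda_1f,\dots,\lambda_nf)=h(\lambda_1,\dots,\lambda_n)f$ is just a mildly more concrete packaging of the step where the paper applies $\omega$ after normalizing $\omega(f)=1$.

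The genuinely different part is $(i)\Rightarrow(iii)$. The paper argues directly: it picks a nonzero real-valued vector lattice homomorphism $\omega$ on the sublattice generated by the relevant elements, observes that $\{\omega(f_1),\dots,\omega(f_n)\}$ is totally ordered in $\mathbb{R}$, invokes Remark~\ref{R:totordsetssame} and the symmetry of $h$ on $\mathbb{R}^n$ to get $\omega(h(f_1,\dots,f_n))=\omega\bigl(h(\mathcal{M}_1,\dots,\mathcal{M}_n)\bigr)$, and then uses that these homomorphisms separate points. You instead record the scalar identity $h\circ(\mathcal{M}_1,\dots,\mathcal{M}_n)=h$ on $\mathbb{R}^n$ (a clean reformulation of symmetry) and then appeal to a composition principle for the Buskes--de~Pagter--van~Rooij functional calculus together with the fact that for the lattice polynomials $\mathcal{M}_k$ the functional calculus reproduces the lattice operations. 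This is a legitimate and conceptually attractive route -- it isolates the real-variable content of the argument as a single identity in $\mathcal{H}(\mathbb{R}^n)$. The one caveat is precisely the obstacle you flagged: the composition/substitution rule is not something you can simply wave at. Its natural proof is to apply homomorphisms to both sides and separate points, which is exactly the paper's argument; and if one instead tries to quote it, one must locate the correct statement in \cite{BusdPvR} or \cite{BusSch} and verify that $h$-completeness of $E$ suffices for the relevant intermediate functional calculus expressions. So the two proofs of $(i)\Rightarrow(iii)$ are ultimately close cousins: yours factors out a reusable lemma, the paper's inlines that lemma's proof. Either way the reasoning is sound, and your formulation via $H=h$ would make a nice remark even if one presents the paper's more self-contained computation.
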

	
	\begin{proof}	
We first prove that $(ii)\implies (i)$. To this end, suppose that $h$ is symmetric on $E$, and consider a permutation $\sigma$ on $[n]$. We have that
		\begin{align*}
			h(f_1,...,f_n)=h\Bigl(f_{\sigma(1)},...,f_{\sigma(n)}\Bigr)
		\end{align*}
		holds for all $f_1,...,f_n\in E$. Fix $f\in E\setminus\{0\}$, and let $A$ be the vector sublattice of $E$ generated by $\{f\}$. Also let $\omega$ be a nonzero real-valued vector lattice homomorphism on $A$. Adjusting by a scalar multiple if necessary, we can assume that $\omega(f)=1$. Next let $x_1,...,x_n\in\mathbb{R}$. Invoking \cite[Theorem~3.11]{BusSch} and our assumption $(ii)$, we get
		
		\begin{align*}
			h(x_1,...,x_n)&=h\Bigl(\omega(x_1f),...,\omega(x_nf)\Bigr)=\omega\Bigl(h(x_1f,...,x_nf)\Bigr)=\omega\Bigl(h(x_{\sigma(1)}f,...,x_{\sigma(n)}f)\Bigr)\\
			&=h\Bigl(\omega(x_{\sigma(1)}f),...,\omega(x_{\sigma(n)}f)\Bigr)=h(x_{\sigma(1)},...,x_{\sigma(n)}).
		\end{align*}
		Hence $h\colon\mathbb{R}^n\to\mathbb{R}$ is symmetric.
		
		We next show that $(i)\implies(iii)$. To this end, assume that $h\colon\mathbb{R}^n\to\mathbb{R}$ is symmetric. Let $f_1,...,f_n\in E$, and let $A$ be the vector sublattice of $E$ generated by
		\[
		\left\{f_1,...,f_n,h(f_1,...,f_n),h\Bigl(\mathcal{M}_1(f_1,...,f_n),\mathcal{M}_2(f_1,...,f_n),\dots,\mathcal{M}_n(f_1,...,f_n)\Bigr)\right\}.
		\]
		Suppose that $\omega\colon A\to\mathbb{R}$ is a non-zero vector lattice homomorphism. Using \cite[Theorem~3.11]{BusSch}, the fact that $\{\omega(f_1),...,\omega(f_n)\}$ is a totally ordered set of real numbers along with Remark~\ref{R:totordsetssame}, and the assumption that $h\colon\mathbb{R}^n\to\mathbb{R}$ is symmetric, we have
		\begin{align*}
			\omega&\Bigl(h(f_1,...,f_n)\Bigr)=h\Bigl(\omega(f_1),...,\omega(f_n)\Bigr)\\
			&=h\Biggl(\mathcal{M}_1\Bigl(\omega(f_1),...,\omega(f_n)\Bigr),\mathcal{M}_2\Bigl(\omega(f_1),...,\omega(f_n)\Bigr),\dots,\mathcal{M}_n\Bigl(\omega(f_1),...,\omega(f_n)\Bigr)\Biggr)\\
			&=h\Biggl(\omega\Bigl(\mathcal{M}_1(f_1,...,f_n)\Bigr),\omega\Bigl(\mathcal{M}_2(f_1,...,f_n)\Bigr),\dots,\omega\Bigl(\mathcal{M}_n(f_1,...,f_n)\Bigr)\Biggr)\\
			&=\omega\Biggl(h\Bigl(\mathcal{M}_1(f_1,...,f_n),\mathcal{M}_2(f_1,...,f_n),\dots,\mathcal{M}_n(f_1,...,f_n)\Bigr)\Biggr).
		\end{align*}
		Since the set of all nonzero vector lattice homomorphisms $\omega\colon A\to\mathbb{R}$ separate the points of $A$, we obtain
		\[
		h(f_1,...,f_n)=h\Bigl(\mathcal{M}_1(f_1,...,f_n),\mathcal{M}_2(f_1,...,f_n),\dots,\mathcal{M}_n(f_1,...,f_n)\Bigr).
		\]
		
		Finally note that the implication $(iii)\implies(ii)$ follows from Proposition~\ref{P:invarimpsym}. This concludes the proof.
	\end{proof}
	
	Since the summation function $h(x_1,...,x_n)=\sum_{k=1}^nx_k\quad (x_1,...,x_n\in\mathbb{R})$ is symmetric, continuous, and positively homogeneous, we acquire the following corollary which extends the elementary result that
	\[
	f+g=(f\wedge g)+(f\vee g)
	\]
	holds for all $f,g\in E$.
	
	\begin{corollary}\label{C:sum}
		Let $n\in\mathbb{N}$. Then
		\[
		\sum_{k=1}^{n}f_k=\sum_{k=1}^{n}\mathcal{M}_k(f_1,...,f_n)
		\]
		holds for all $f_1,..,f_n\in E$.
	\end{corollary}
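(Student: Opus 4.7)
The plan is to invoke Theorem~\ref{T:funcal} directly with the summation function. Define $h\colon\mathbb{R}^n\to\mathbb{R}$ by $h(x_1,\ldots,x_n)=\sum_{k=1}^n x_k$. This function is linear, hence continuous and positively homogeneous, so $h\in\mathcal{H}(\mathbb{R}^n)$. It is also manifestly symmetric, so hypothesis $(i)$ of Theorem~\ref{T:funcal} is satisfied.

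Next I would verify that $E$ is $h$-complete for this particular $h$, so that Theorem~\ref{T:funcal} is applicable without any extra assumption on $E$. The candidate element is of course $g:=\sum_{k=1}^n f_k\in E$. For any nonzero real-valued vector lattice homomorphism $\omega$ on the vector sublattice generated by $\{f_1,\ldots,f_n,g\}$, linearity of $\omega$ gives
\[
\omega(g)=\sum_{k=1}^n \omega(f_k)=h\bigl(\omega(f_1),\ldots,\omega(f_n)\bigr),
\]
so $h(f_1,\ldots,f_n)=g$ in the sense of the Archimedean functional calculus. In particular, $E$ is $h$-complete and the functional-calculus value of $h$ coincides with the ordinary vector lattice sum.

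Now Theorem~\ref{T:funcal} applied to this $h$ yields, for every $f_1,\ldots,f_n\in E$, that $h\colon E^n\to E$ is total orderization invariant, i.e.
\[
h(f_1,\ldots,f_n)=h\bigl(\mathcal{M}_1(f_1,\ldots,f_n),\ldots,\mathcal{M}_n(f_1,\ldots,f_n)\bigr).
\]
Translating both sides back to vector lattice sums via the identification in the previous paragraph gives the desired identity $\sum_{k=1}^n f_k=\sum_{k=1}^n \mathcal{M}_k(f_1,\ldots,f_n)$. There is no genuine obstacle here; the only point that warrants a sentence of justification is the $h$-completeness check, which is immediate from the linearity of vector lattice homomorphisms.
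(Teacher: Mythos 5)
Your proof is correct and takes essentially the same approach as the paper: applying Theorem~\ref{T:funcal} to the summation function $h(x_1,\ldots,x_n)=\sum_{k=1}^n x_k\in\mathcal{H}(\mathbb{R}^n)$. The paper leaves the $h$-completeness check implicit, whereas you spell it out explicitly by exhibiting $g=\sum_{k=1}^n f_k$ as the functional-calculus value; this is a worthwhile detail since the corollary is stated for a general Archimedean $E$ rather than a uniformly complete one.
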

	
	In order to showcase the utility of the theorems in this paper so far, we next provide two corollaries which follow immediately from Theorem~\ref{T:funcal} as well as Theorem~\ref{T:genorthosym} and Theorem~\ref{T:genorthsteady}, respectively. 	We note that Corollary~\ref{C:orthofuncal} contains the geometric means $\displaystyle\mathfrak{G}(x_1,...,x_n)=\sqrt[n]{\prod_{k=1}^{n}|x_k|}$ and the infimum $\mathcal{M}_1$, while Corollary~\ref{C:steadyfuncal} encompasses functions of the form $\displaystyle\mathfrak{S}(x_1,...,x_n)=\sqrt[n]{\sum_{k=1}^nx_k^n}$ (which appear later in this paper), the supremum $\mathcal{M}_n$, and the summation function.
	
	\begin{corollary}\label{C:orthofuncal}
		Fix $n\in\mathbb{N}$. Let $h\in\mathcal{H}(\mathbb{R}^n)$ be symmetric, and assume that $E$ is $h$-complete. The following are equivalent.
		\begin{itemize}
			\item[$(i)$] $h(f_1,...,f_n)=0$ whenever $f_1,...,f_n\in E^+$ and $f_i\wedge f_j=0$ for some $i,j\in[n]$, and
			\item[$(ii)$] $h(0,f_1,...,f_{n-1})=0$ holds for all $f_1,...,f_{n-1}\in E^+$.
		\end{itemize}
	\end{corollary}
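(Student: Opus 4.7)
The plan is to apply Theorem~\ref{T:funcal} together with Theorem~\ref{T:genorthosym}, as indicated in the text preceding the corollary. The only real work is checking that the hypotheses of Theorem~\ref{T:genorthosym} are met when we restrict the functional calculus map to the positive cone.

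First, I would observe that since $h\in\mathcal{H}(\mathbb{R}^n)$ is symmetric and $E$ is $h$-complete, Theorem~\ref{T:funcal} ((i)$\Rightarrow$(iii)) gives that $h\colon E^n\to E$ defined via the Archimedean functional calculus is total orderization invariant. Next, I would note that $E^+$ is a nonempty distributive lattice (as a sublattice of $E$) with smallest element $\theta=0$, and that $E^+$ is closed under the operations $\mathcal{M}_k$ (each $\mathcal{M}_k$ is a finite lattice polynomial in its arguments, so it maps $(E^+)^n$ into $E^+$). Therefore the restriction $T:=h|_{(E^+)^n}\colon (E^+)^n\to E$ is total orderization invariant in the sense of Section~\ref{S:maps}.

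With this setup, I would invoke Theorem~\ref{T:genorthosym} with $L=E^+$, $A=E$, $\theta=0$, and $a=0$. Statement (i) of Theorem~\ref{T:genorthosym} then reads: $T(f_1,\dots,f_n)=0$ whenever $f_1,\dots,f_n\in E^+$ and $f_i\wedge f_j=0$ for some $i,j\in[n]$, which is precisely statement (i) of the corollary. Statement (ii) of Theorem~\ref{T:genorthosym} reads: $T(0,f_1,\dots,f_{n-1})=0$ for all $f_1,\dots,f_{n-1}\in E^+$, which is precisely statement (ii) of the corollary. Hence the equivalence (i)$\Leftrightarrow$(ii) in the corollary is an immediate transcription of the equivalence in Theorem~\ref{T:genorthosym}.

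There is no real obstacle here; the only point requiring any thought is the verification that $E^+$ qualifies as the distributive lattice $L$ of Section~\ref{S:maps} (with $0$ as its smallest element) and that $h$ restricted to $(E^+)^n$ inherits total orderization invariance from $h\colon E^n\to E$. Both checks are immediate, so the proof will be short and essentially consist of citing Theorem~\ref{T:funcal} and Theorem~\ref{T:genorthosym} in sequence.
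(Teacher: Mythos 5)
Your proposal is correct and matches the paper's intended argument: the paper states that this corollary "follows immediately" from Theorem~\ref{T:funcal} together with Theorem~\ref{T:genorthosym}, and your route (use Theorem~\ref{T:funcal} (i)$\Rightarrow$(iii) to get total orderization invariance of the functional-calculus map, restrict to $(E^+)^n$ noting $E^+$ is a distributive lattice with least element $0$ that is closed under each $\mathcal{M}_k$, and then invoke Theorem~\ref{T:genorthosym} with $L=E^+$, $\theta=0$, $a=0$) is exactly that derivation, spelled out. The only point worth verifying, which you did, is that the restriction of $h$ to $(E^+)^n$ genuinely fits the hypotheses of Theorem~\ref{T:genorthosym}.
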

	
	\begin{corollary}\label{C:steadyfuncal}
		Fix $n\in\mathbb{N}$. Let $h\in\mathcal{H}(\mathbb{R}^n)$ be symmetric, and assume that $E$ is $h$-complete. The following are equivalent.
		\begin{itemize}
			\item[$(i)$] $h(f_1,...,f_n)=\bigvee_{k=1}^nf_k$ whenever $\{f_1,...,f_n\}$ is a pairwise disjoint subset of $E^+$, and
			\item[$(ii)$] $h(\underbrace{0,\dots,0}_{n-1\ \text{copies}},f)=f$ holds for all $f\in E^+$.
		\end{itemize}
	\end{corollary}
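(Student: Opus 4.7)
The plan is to apply Theorem~\ref{T:genorthsteady} directly to obtain the equivalence, after first certifying the total orderization invariance hypothesis via Theorem~\ref{T:funcal}. Specifically, I would take the distributive lattice $L$ of Theorem~\ref{T:genorthsteady} to be the positive cone $E^+$ (with smallest element $\theta=0$), the nonempty set $A$ to be $E$, the function $\phi\colon E^+\to E$ to be the identity inclusion $\phi(f)=f$, and $T\colon(E^+)^n\to E$ to be the restriction of $h$ to $(E^+)^n$.

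The single prerequisite needing verification before I can invoke Theorem~\ref{T:genorthsteady} is that this $T$ is total orderization invariant on $E^+$. For this I would use the implication $(i)\Rightarrow(iii)$ of Theorem~\ref{T:funcal}: since $h\in\mathcal{H}(\mathbb{R}^n)$ is symmetric and $E$ is $h$-complete, the function $h\colon E^n\to E$ defined via the Archimedean functional calculus is total orderization invariant on all of $E^n$. Restricting to $(E^+)^n$ causes no issue: the lattice operations $\wedge,\vee$ preserve $E^+$, so each $\mathcal{M}_k(f_1,\ldots,f_n)$ with $f_1,\ldots,f_n\in E^+$ lies in $E^+$, and the total orderization of $\{f_1,\ldots,f_n\}$ computed inside $E^+$ coincides with the one computed inside $E$. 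Hence the restriction $T$ inherits total orderization invariance.

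With this setup, conditions $(i)$ and $(ii)$ of Theorem~\ref{T:genorthsteady} translate \emph{verbatim} into conditions $(i)$ and $(ii)$ of the corollary: pairwise disjointness of $\{f_1,\ldots,f_n\}\subseteq E^+$ is exactly $f_i\wedge f_j=0$ for all $i\neq j$, and with $\phi$ the inclusion the formula $\phi\bigl(\bigvee_{k=1}^n f_k\bigr)=\bigvee_{k=1}^n f_k$ holds. The equivalence then follows immediately from Theorem~\ref{T:genorthsteady}. There is no substantive obstacle; the author's assertion that the corollary ``follows immediately'' is accurate. The main care required in the write-up is purely bookkeeping: verifying that $E^+$ qualifies as a distributive lattice with smallest element $0$, noting that the target $A$ in Theorem~\ref{T:genorthsteady} is only required to be a set (so the codomain $E$ causes no obstruction), and observing that restricting a total orderization invariant map from $E$ to $E^+$ preserves total orderization invariance.
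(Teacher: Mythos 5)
Your proposal is correct and matches the paper's intended argument exactly: the paper explicitly states that Corollary~\ref{C:steadyfuncal} ``follows immediately from Theorem~\ref{T:funcal}\ldots and Theorem~\ref{T:genorthsteady},'' and your instantiation $L=E^+$, $\theta=0$, $A=E$, $\phi=\mathrm{id}$, with total orderization invariance of the restriction certified by Theorem~\ref{T:funcal}$(i)\Rightarrow(iii)$, is precisely that route. The bookkeeping remarks (that $E^+$ is a distributive lattice with least element, that $A$ need only be a set, and that restriction to $(E^+)^n$ preserves total orderization invariance) are exactly the right things to check.
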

	
Next we shift our focus to orthosymmetric multilinear maps, orthogonally additive polynomials, and orthogonally steady power sum polynomials on vector lattices.
	
	\begin{definition}
		Given $n\in\mathbb{N}$ and a vector space $V$, we call an $n$-linear map $T\colon E^n\to V$ \textit{orthosymmetric} if $T(f_1,...,f_n)=0$ holds whenever $f_1,...,f_n\in E$ and $|f_i|\wedge|f_j|=0$ for some $i,j\in[n]$. An $n$-homogeneous polynomial $P\colon E\to V$ is said to be \textit{orthogonally additive} if $P(f+g)=P(f)+P(g)$ holds for all $f,g\in E$ with $|f|\wedge|g|=0$. If $P(f+g)=P(f)+P(g)$ holds for all $f,g\in E^+$ with $f\wedge g=0$, then we say that $P$ is \textit{positively orthogonally additive}. Likewise, we call a map $T\colon E^n\to V$ \textit{positively total orderization invariant} if the restriction of $T$ to $(E^+)^n$ is total orderization invariant.
	\end{definition}
	
	Fix $n,r\in\mathbb{N}$. Classically, a power sum symmetric polynomial of degree $n$ in $r$ variables is a map $S\colon\mathbb{R}^r\to\mathbb{R}$ of the form
	\[
	S(x_1,...,x_r)=\sum_{k=1}^{r}x_k^n.
	\]
	We naturally extend this notion to more general settings as follows.
	
	\begin{definition}
		Let $n,r\in\mathbb{N}$. For real vector spaces $U$ and $V$ and an $n$-homogeneous polynomial $P\colon U\to V$, define a symmetric map $S\colon U^r\to V$ by
		\[
		S(u_1,...,u_r)=\sum_{k=1}^{r}P(u_k).
		\]
		We call maps of this form \textit{power sum symmetric polynomials} of degree $n$ in $r$ variables or simply \textit{power sum polynomials} for short. The polynomial $P$ will be refereed to as the \textit{generating polynomial} for $S$. We call a power sum polynomial $S\colon E^r\to V$ \textit{orthogonally steady} if
		\[
		S(f_1,...,f_r)=S(0,f_1,...,f_i+f_j,...,f_r)
		\]
		holds whenever $f_1,...,f_r\in E$ and $i,j\in[r]$ with $i\neq j$ satisfy $|f_i|\wedge|f_j|=0$.
	\end{definition}
	
	The intimate relationship between orthogonally steady power sum polynomials of degree $n$ and orthogonally additive $n$-homogeneous polynomials is evident from the definitions. We formally state this fact below.
	
	\begin{lemma}\label{L:steadyiffadditive}
		Fix $n,r\in\mathbb{N}$ with $r\geq 2$. Let $V$ be a real vector space. A power sum polynomial $S\colon E^r\to V$ of degree $n$ in $r$ variables is orthogonally steady if and only if its generating $n$-homogeneous polynomial is orthogonally additive. 
	\end{lemma}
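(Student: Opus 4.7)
The plan is to unfold both definitions and verify that the two conditions translate into the same equation $P(f+g)=P(f)+P(g)$ for orthogonal $f,g$. Writing $S(u_1,\dots,u_r)=\sum_{k=1}^r P(u_k)$, the steadiness equation
\[
S(f_1,\dots,f_r)=S(0,f_1,\dots,f_i+f_j,\dots,f_r)
\]
becomes, after cancelling the $\sum_{k\neq i,j}P(f_k)$ that appears on both sides,
\[
P(f_i)+P(f_j)=P(0)+P(f_i+f_j).
\]
The preliminary fact I need is that $P(0)=0$, which follows from $n$-homogeneity: $P(0)=P(\lambda\cdot 0)=\lambda^n P(0)$ for every $\lambda\in\mathbb{R}$ (using $n\geq 1$, which holds since $\mathbb{N}$ denotes the strictly positive integers). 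So the steadiness equation reduces to $P(f_i+f_j)=P(f_i)+P(f_j)$ whenever $|f_i|\wedge|f_j|=0$.

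For the implication ``$P$ orthogonally additive $\Rightarrow$ $S$ orthogonally steady,'' I would simply read the computation above in reverse: given $f_1,\dots,f_r\in E$ and $i\neq j$ with $|f_i|\wedge|f_j|=0$, orthogonal additivity of $P$ together with $P(0)=0$ gives the displayed identity, so $S$ is orthogonally steady. For the converse ``$S$ orthogonally steady $\Rightarrow$ $P$ orthogonally additive,'' I would take arbitrary $f,g\in E$ with $|f|\wedge|g|=0$, set $f_1:=f$, $f_2:=g$, and $f_k:=0$ for $3\leq k\leq r$ (this uses $r\geq 2$), and apply steadiness with $i=1$, $j=2$; after cancelling the $P(0)$ terms from both sides, what remains is exactly $P(f)+P(g)=P(f+g)$.

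There is no real obstacle here; the result is essentially a definition-chase, and the only small care needed is the observation $P(0)=0$, which justifies absorbing or discarding the extra $0$-slot on the right-hand side of the steadiness condition. The hypothesis $r\geq 2$ is used exactly to have room for the pair of orthogonal arguments in the converse direction.
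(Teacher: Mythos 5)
Your proof is correct. The paper itself omits a proof for this lemma, remarking only that the equivalence is ``evident from the definitions,'' and your definition-chase (cancelling the common terms $\sum_{k\neq i,j}P(f_k)$, using $P(0)=0$ from $n$-homogeneity, and padding with zeros for the converse, which is exactly where $r\geq 2$ is needed) is precisely the argument the paper leaves to the reader.
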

	
	In this section we reference the vector lattice $n$-power (for $n\in\mathbb{N}$) of an Archimedean vector lattice $E$, see \cite{BoBus}.
	
	\begin{notation}
		For $n\in\mathbb{N}$, the vector lattice $n$-power of $E$ will be denoted by \textnormal{$(E^{\textnormal{\textcircled{$n$}} },\textnormal{\textcircled{$n$}})$}.
	\end{notation}
	
	We in particular study bounded orthosymmetric $n$-linear maps $T\colon E^n\to Y$ and bounded orthogonally steady power sum polynomials of degree $n$ in $r$ variables $S\colon E^r\to Y$, where $E$ is a uniformly complete Archimedean vector lattice and $Y$ is a real separated convex bornological space. This setup is of profound interest to us because the theory of total orderization-invariance alone does not appear to be sufficient for achieving all of our desired results. We will thus receive aid from the following three results by Kusraeva as well some techniques used in the proofs of these theorems. For more information on separated convex bornological spaces in the present context, see \cite{Kusa2}.
	
	\begin{theorem}\cite[Theorem~4]{Kusa2}\label{T:KusaThm4}
		Let $E$ be uniformly complete, and let $Y$ be a real separated convex bornological space. If $n\in\mathbb{N}$ and $P\colon E\to Y$ is a bounded orthogonally additive $n$-homogeneous polynomial, then there exists a unique bounded linear operator $S\colon E^{\textnormal{\textcircled{$n$}}}\to Y$ such that
		\[
		P(f)=S\circ\textnormal{\textcircled{$n$}}(\underbrace{f,\dots,f}_{n\ \text{copies}})\quad (f\in E).
		\]
	\end{theorem}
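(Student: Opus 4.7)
The plan is to proceed via the classical polarization-then-linearization strategy for orthogonally additive polynomials. I would first associate to $P$ a bounded symmetric $n$-linear map $T\colon E^n\to Y$ by polarization, then upgrade $T$ to an orthosymmetric map using the orthogonal additivity of $P$, and finally invoke the universal factorization property of the vector lattice $n$-power $(E^{\textnormal{\textcircled{$n$}}},\textnormal{\textcircled{$n$}})$ from \cite{BoBus} to produce the linear operator $S$.

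For the polarization step, I would define
\[
T(f_1,\dots,f_n):=\frac{1}{n!}\sum_{\emptyset\neq J\subseteq[n]}(-1)^{n-|J|}P\!\left(\sum_{j\in J}f_j\right),
\]
which is symmetric and $n$-linear with $T(f,\dots,f)=P(f)$. Boundedness of $T$ follows from boundedness of $P$, since each argument $\sum_{j\in J}f_j$ lies in the order-bounded hull of $\{f_1,\dots,f_n\}$, and a finite signed sum of bounded images remains bounded in the separated convex bornological space $Y$.

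The crux of the argument, and the step I expect to be hardest, is showing that $T$ is orthosymmetric, i.e.\ $T(f_1,\dots,f_n)=0$ whenever $|f_i|\wedge|f_j|=0$ for some $i\neq j$. By symmetry of $T$ it suffices to handle $i=1$, $j=2$, and by $n$-linearity together with the Jordan decomposition $f=f^+-f^-$ one reduces to the case of positive entries. For positive $f_1,f_2$ with $f_1\wedge f_2=0$, orthogonal additivity of $P$ applied to each sum $\sum_{j\in J}f_j$ appearing in the polarization formula lets one split terms of the form $P(\cdots+f_1+f_2+\cdots)$ into $P(\cdots+f_1+\cdots)+P(\cdots+f_2+\cdots)-P(\cdots)$ and cancel them pairwise across the alternating sign pattern. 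The possibly non-disjoint entries $f_3,\dots,f_n$ are handled by passing to the principal band generated by $f_1+f_2$ and invoking uniform completeness of $E$ to approximate, via the Freudenthal spectral theorem, each $f_k$ by a combination of components either disjoint from $f_1$ or disjoint from $f_2$; boundedness of $P$ ensures the corresponding limits in $Y$ exist and the cancellation persists.

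With $T$ bounded, symmetric, and orthosymmetric, the universal property of $(E^{\textnormal{\textcircled{$n$}}},\textnormal{\textcircled{$n$}})$ from \cite{BoBus}, together with its compatibility with the bornology on $Y$, yields a unique bounded linear $S\colon E^{\textnormal{\textcircled{$n$}}}\to Y$ with $T=S\circ\textnormal{\textcircled{$n$}}$. Specializing at $(f,\dots,f)$ gives
\[
P(f)=T(f,\dots,f)=S\circ\textnormal{\textcircled{$n$}}(\underbrace{f,\dots,f}_{n\ \text{copies}}),
\]
and uniqueness of $S$ is immediate from the universal property, since the image of $\textnormal{\textcircled{$n$}}$ linearly spans $E^{\textnormal{\textcircled{$n$}}}$.
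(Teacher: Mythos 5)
First, note that the paper does not prove this statement; it is cited verbatim as \cite[Theorem~4]{Kusa2}, so there is no in-paper proof to compare against. I will therefore assess your proposal on its own merits, keeping in mind the route Kusraeva actually takes in \cite{Kusa2}.

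Your high-level strategy --- polarize $P$ to a bounded symmetric $n$-linear $T$, show $T$ is orthosymmetric, then factor through $(E^{\textnormal{\textcircled{$n$}}},\textnormal{\textcircled{$n$}})$ using the result you call Theorem~\ref{T:KusaProofofThm4} --- is the standard conceptual route, and the first and third steps are fine modulo the usual bookkeeping. The genuine gap is in the orthosymmetry step, which is the real content of the theorem and is precisely what \cite[Lemma~4]{Kusa2} handles. Your proposed ``split and cancel pairwise'' argument via orthogonal additivity on the polarization sum only works outright when $n=2$, because for a subset $J$ containing both $1$ and $2$ the element $\sum_{j\in J}f_j$ need not decompose as a disjoint sum: the summands $f_3,\dots,f_n$ occurring in $J$ are not assumed disjoint from $f_1$ or $f_2$, so the identity $P\bigl(\cdots+f_1+f_2+\cdots\bigr)=P\bigl(\cdots+f_1+\cdots\bigr)+P\bigl(f_2\bigr)$ is simply unavailable.

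Your proposed repair --- pass to the principal band of $f_1+f_2$ and use Freudenthal to write each $f_k$ as a combination of components disjoint from $f_1$ or from $f_2$ --- does not go through in the stated generality. Uniform completeness does not imply that principal bands are projection bands (consider $C[0,1]$), so the band decomposition of $f_k$ relative to $f_1$ and $f_2$ you are implicitly relying on need not exist. Even granting such a decomposition $h_k=a_k+b_k$ with $a_k\perp f_2$ and $b_k\perp f_1$, expanding $T(f_1,f_2,h_3,\dots,h_n)$ multilinearly produces terms $T(f_1,f_2,c_3,\dots,c_n)$ in which the two groups $\{f_1\}\cup\{c_i:c_i\perp f_2\}$ and $\{f_2\}\cup\{c_i:c_i\perp f_1\}$ are not pairwise disjoint across the groups (e.g.\ $a_i$ and $b_j$ can overlap), so the cancellation you claim ``persists'' is not established. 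Kusraeva's actual proof sidesteps all of this by working through the semiprime $f$-algebra structure of the universal completion and the Archimedean functional calculus, which is also the technique the present paper uses in its sketch of Theorem~\ref{T:Kusa} and in Lemma~\ref{L:mult}; if you want to keep your polarization framework you would instead need to first establish, via a scalar scaling argument, that $T(f^{j}g^{n-j})=0$ for disjoint $f,g\ge 0$ and $0<j<n$, and then prove (e.g.\ by polarizing in the remaining $n-1$ slots) that $T$ vanishes whenever one entry is disjoint from all others, and only then bootstrap to full orthosymmetry; none of these steps is present in your argument.
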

	
	Actually, the proof of \cite[Theorem~4]{Kusa2} shows the following slightly more general result.
	
	\begin{theorem}\cite{Kusa2}\label{T:KusaProofofThm4}
		Let $E$ be uniformly complete, and let $Y$ be a real separated convex bornological space. If $n\in\mathbb{N}$ and $T\colon E^n\to Y$ is a bounded orthosymmetric $n$-linear map, then there exists a unique bounded linear operator $S\colon E^{\textnormal{\textcircled{$n$}}}\to Y$ such that
		\[
		T(f_1,...,f_n)=S\circ\textnormal{\textcircled{$n$}}(f_1,\dots,f_n)\quad (f_1,...,f_n\in E).
		\]
	\end{theorem}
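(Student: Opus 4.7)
The plan is to derive Theorem~\ref{T:KusaProofofThm4} from Theorem~\ref{T:KusaThm4} by the classical polarization technique. First I would associate to the bounded orthosymmetric $n$-linear map $T$ the $n$-homogeneous polynomial $P\colon E\to Y$ defined by $P(f):=T(f,\dots,f)$. The key verification is that $P$ is orthogonally additive: for $f,g\in E$ with $|f|\wedge|g|=0$, expanding $P(f+g)=T(f+g,\dots,f+g)$ by $n$-linearity produces $2^n$ terms of the form $T(h_1,\dots,h_n)$ with each $h_i\in\{f,g\}$, and every mixed term vanishes by orthosymmetry of $T$, leaving precisely $P(f)+P(g)$. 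Boundedness and $n$-homogeneity of $P$ are immediate from the corresponding properties of $T$.

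Next I would invoke Theorem~\ref{T:KusaThm4} to obtain a unique bounded linear operator $S\colon E^{\textnormal{\textcircled{$n$}}}\to Y$ satisfying $P(f)=S\circ\textnormal{\textcircled{$n$}}(f,\dots,f)$ for every $f\in E$. The heart of the argument is then to upgrade this diagonal identity to the full multilinear identity $T(f_1,\dots,f_n)=S\circ\textnormal{\textcircled{$n$}}(f_1,\dots,f_n)$. To this end, set $\tilde T(f_1,\dots,f_n):=S\circ\textnormal{\textcircled{$n$}}(f_1,\dots,f_n)$; since $\textnormal{\textcircled{$n$}}$ is a symmetric $n$-morphism and $S$ is linear, $\tilde T$ is a symmetric $n$-linear map whose diagonal equals $P$. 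The map $T$ itself is symmetric, because an orthosymmetric multilinear map on an Archimedean vector lattice is automatically symmetric (Buskes--van Rooij and extensions by Kusraev). Since the polarization identity recovers any symmetric $n$-linear map over $\mathbb{R}$ from its diagonal, we conclude $T=\tilde T$, which is the desired factorization.

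Uniqueness of $S$ then follows from the fact that $\textnormal{\textcircled{$n$}}(E^n)$ linearly spans $E^{\textnormal{\textcircled{$n$}}}$, a built-in feature of the vector lattice $n$-power construction. The step I expect to be the main obstacle is justifying the symmetry of $T$ cleanly in the bornological setting: if the relevant symmetry theorem is not directly available for multilinear maps valued in a separated convex bornological space, the alternative is to inspect Kusraeva's original argument for Theorem~\ref{T:KusaThm4} and observe that her construction of $S$ is phrased on orthogonally additive polynomials only for convenience, while the actual construction proceeds from the universal property of $\textnormal{\textcircled{$n$}}$ applied to the orthosymmetric structure itself. This is precisely the reinterpretation the paper alludes to when it remarks that the same proof yields the stronger statement.
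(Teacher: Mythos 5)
The paper does not prove this statement at all: immediately above it, the text remarks that ``the proof of \cite[Theorem~4]{Kusa2} shows the following slightly more general result,'' and leaves the reader to consult Kusraeva's argument. So there is no in-paper proof to compare against; any correct proof you supply is new content relative to the paper. Your polarization route is a legitimate and genuinely different approach: rather than reading off the multilinear factorization from the internals of Kusraeva's construction, you reduce the multilinear statement to the diagonal (polynomial) case, apply Theorem~\ref{T:KusaThm4}, and then argue that two symmetric $n$-linear maps agreeing on the diagonal coincide. The steps producing $P$ from $T$, verifying orthogonal additivity of $P$ by killing mixed terms, inheriting boundedness, and invoking Theorem~\ref{T:KusaThm4} are all sound, and so is the uniqueness of $S$ from the fact that $\textnormal{\textcircled{$n$}}(E^n)$ spans $E^{\textnormal{\textcircled{$n$}}}$.

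The one place where you need to be more careful than you were is exactly the place you flagged: the symmetry of $T$. Polarization only recovers the \emph{symmetric} $n$-linear map generating a polynomial, so without symmetry you can only conclude that the symmetrization of $T$ equals $S\circ\textnormal{\textcircled{$n$}}$, not $T$ itself. The classical Buskes--van Rooij symmetry theorem, and its $n$-linear extensions, are stated for positive (or at least order bounded, regular) orthosymmetric operators valued in Archimedean vector lattices. Here $T$ is merely \emph{bounded} in the bornological sense and takes values in a separated convex bornological space $Y$, which is not a vector lattice, so the standard citations do not apply verbatim; you would need to either locate a version adapted to this setting (Kusraeva does establish what is needed en route) or re-derive it. Your closing observation---that the cleanest fix is to inspect Kusraeva's proof of Theorem~\ref{T:KusaThm4} and note that it already factors a bounded orthosymmetric $T$, with the polynomial version merely a specialization---is exactly right, and is in fact precisely what the paper does when it presents the statement as a by-product of \cite[Theorem~4]{Kusa2} rather than proving it independently.
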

	
	The following portion of the main result from \cite{Kusa}, also by Kusraeva, is only stated for positive $f_{1},\dots,f_{r}\in E$, yet Kusraeva's proof actually holds for all $f_{1},\dots,f_{r}\in E$. For the reader's convenience, we give an outline of the proof with some slight modifications.
	
	\begin{theorem}\cite{Kusa}\label{T:Kusa}
		Fix $n,r\in\mathbb{N}$. Let $E$ be uniformly complete, suppose $Y$ is a real separated convex bornological space, and let $P\colon E\to Y$ be a bounded orthogonally additive $n$-homogeneous polynomial. Define
		\[
		\mathfrak{S}(x_1,\dots,x_r)=\sqrt[n]{\sum_{k=1}^{r}x_k^n}\quad (x_1,\dots,x_r\in\mathbb{R}).
		\]
		Then
		\begin{equation}\label{eq:RMP}
			P\Bigl(\mathfrak{S}(f_{1},\dots,f_{r})\Bigr)=\sum_{k=1}^rP(f_{k})
		\end{equation}
		holds for all $f_{1},\dots,f_{r}\in E$, where the expression $\mathfrak{S}(f_{1},\dots,f_{r})$ in \eqref{eq:RMP} is defined via the Archimedean vector lattice functional calculus.
	\end{theorem}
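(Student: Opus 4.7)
The plan is to lift $P$ to a bounded linear operator on the $n$-power $E^{\textnormal{\textcircled{$n$}}}$ via Theorem~\ref{T:KusaThm4}, and then reduce \eqref{eq:RMP} to a single identity inside $E^{\textnormal{\textcircled{$n$}}}$. Concretely, Theorem~\ref{T:KusaThm4} produces a bounded linear operator $S\colon E^{\textnormal{\textcircled{$n$}}}\to Y$ with $P(f)=S(f^{\textnormal{\textcircled{$n$}}})$ for every $f\in E$, where $f^{\textnormal{\textcircled{$n$}}}=\textnormal{\textcircled{$n$}}(f,\dots,f)$. Applying the linear operator $S$ to both sides of the hypothetical relation
\[
\mathfrak{S}(f_{1},\dots,f_{r})^{\textnormal{\textcircled{$n$}}} = \sum_{k=1}^{r} f_k^{\textnormal{\textcircled{$n$}}}
\]
immediately yields \eqref{eq:RMP}, so the entire problem reduces to proving this identity in $E^{\textnormal{\textcircled{$n$}}}$.

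To establish the identity, I would separate points of a finitely generated sublattice $A\subseteq E^{\textnormal{\textcircled{$n$}}}$ containing the summands displayed above by real-valued vector lattice homomorphisms. For each such $\tau\colon A\to\mathbb{R}$, the composition $\tau\circ\textnormal{\textcircled{$n$}}$ restricted to the sublattice $B\subseteq E$ generated by $\{f_1,\dots,f_r,\mathfrak{S}(f_{1},\dots,f_{r})\}$ is a positive, symmetric, orthosymmetric $n$-linear form into $\mathbb{R}$; using the concrete Boulabiar--Buskes realization of $E^{\textnormal{\textcircled{$n$}}}$, which produces $\textnormal{\textcircled{$n$}}$ as a pointwise product inside an ambient $f$-algebra, one extracts a lattice homomorphism $\omega\colon B\to\mathbb{R}$ for which $\tau(\textnormal{\textcircled{$n$}}(g_1,\dots,g_n))=\omega(g_1)\cdots\omega(g_n)$, and in particular $\tau(g^{\textnormal{\textcircled{$n$}}})=\omega(g)^n$. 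Combining this with the functional-calculus compatibility \cite[Theorem~3.11]{BusSch}, which yields $\omega(\mathfrak{S}(f_{1},\dots,f_{r}))=\mathfrak{S}(\omega(f_1),\dots,\omega(f_r))$, and with the purely real identity $\mathfrak{S}(x_1,\dots,x_r)^n=\sum_k x_k^n$ (valid for all $x_k\in\mathbb{R}$, in particular without any sign restriction), I obtain
\[
\tau\bigl(\mathfrak{S}(f_{1},\dots,f_{r})^{\textnormal{\textcircled{$n$}}}\bigr)=\mathfrak{S}\bigl(\omega(f_1),\dots,\omega(f_r)\bigr)^n=\sum_{k=1}^{r}\omega(f_k)^n=\sum_{k=1}^{r}\tau(f_k^{\textnormal{\textcircled{$n$}}}).
\]
Since such $\tau$ separate the points of $A$, the desired identity in $E^{\textnormal{\textcircled{$n$}}}$ follows.

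The main obstacle is the extraction of the single compatible homomorphism $\omega$ from $\tau$: this is the step where uniform completeness of $E$ is decisively used and is the substance of Kusraeva's original argument in \cite{Kusa}. Once this factorization is in hand, the only modification required to extend the conclusion to signed $f_k$ is the trivial but crucial observation that the underlying real identity $\mathfrak{S}(x_1,\dots,x_r)^n=\sum_{k=1}^{r}x_k^n$ imposes no positivity requirement on the $x_k$, so the signed case proceeds verbatim from the positive case treated in \cite{Kusa}.
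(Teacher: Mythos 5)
Your overall reduction is the same as the paper's: apply Theorem~\ref{T:KusaThm4} to get a bounded linear $S\colon E^{\textnormal{\textcircled{$n$}}}\to Y$ with $P(f)=S\circ\textnormal{\textcircled{$n$}}(f,\dots,f)$, so that the theorem reduces to the identity $\textnormal{\textcircled{$n$}}\bigl(\mathfrak{S}(f_1,\dots,f_r),\dots,\mathfrak{S}(f_1,\dots,f_r)\bigr)=\sum_{k=1}^r\textnormal{\textcircled{$n$}}(f_k,\dots,f_k)$ in $E^{\textnormal{\textcircled{$n$}}}$. Where you diverge is in how this identity is established, and the step you flag as ``the main obstacle'' is a genuine gap that the paper's argument avoids entirely rather than solves.

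The paper does not separate points of a sublattice of $E^{\textnormal{\textcircled{$n$}}}$ and then try to extract a compatible $\omega$ from each $\tau$. Instead it embeds the computation in the universal completion $E^u$, which is a semiprime $f$-algebra, forms the $f$-subalgebra $C$ generated by $\{f_1,\dots,f_r,\mathfrak{S}(f_1,\dots,f_r)\}$, and uses \emph{multiplicative} lattice homomorphisms $\omega\colon C\to\mathbb{R}$. These are available directly from the $f$-algebra structure and are known to separate the points of $C$ by \cite[Corollary~2.7]{BusdPvR}; by multiplicativity, $\omega(x^n)=\omega(x)^n$ and $\omega$ automatically commutes with both the functional calculus (via \cite[Theorem~3.11]{BusSch}) and the product. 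This gives $\bigl(\mathfrak{S}(f_1,\dots,f_r)\bigr)^n=\sum_{k=1}^r f_k^n$ inside $E^u$, which is then pulled back to $E^{\textnormal{\textcircled{$n$}}}$ through the Boulabiar--Buskes isomorphism $i$ of \cite[Theorem~4.1]{BoBus}. No ``extraction'' of $\omega$ from a pre-existing $\tau$ ever occurs; the multiplicative homomorphisms are the starting point.

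Your proposed step---given an arbitrary lattice homomorphism $\tau$ on a sublattice of $E^{\textnormal{\textcircled{$n$}}}$, producing a single $\omega\colon B\to\mathbb{R}$ with $\tau(\textnormal{\textcircled{$n$}}(g_1,\dots,g_n))=\omega(g_1)\cdots\omega(g_n)$---is exactly the hard part, and you neither prove it nor cite a result that delivers it. It is not ``the substance of Kusraeva's original argument''; Kusraeva's argument (as reproduced in the paper) is arranged precisely so that no such factorization of a given $\tau$ is needed. You also misidentify where uniform completeness enters: it is used to guarantee that $\mathfrak{S}(f_1,\dots,f_r)$ exists in $E$ (i.e.~that $E$ is $\mathfrak{S}$-complete) and to invoke \cite[Theorem~4.1]{BoBus} and Theorem~\ref{T:KusaThm4}; it plays no role in extracting $\omega$. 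The correct fix is to flip the direction of your point-separation argument: start from the multiplicative homomorphisms on the $f$-subalgebra $C\subseteq E^u$, where multiplicativity and functional-calculus compatibility come for free, and transfer the resulting identity to $E^{\textnormal{\textcircled{$n$}}}$ via $i^{-1}$. Your final observation that the scalar identity $\mathfrak{S}(x_1,\dots,x_r)^n=\sum_k x_k^n$ holds without any sign restriction on the $x_k$ is correct and is indeed what lets the conclusion hold for all (not merely positive) $f_k$.
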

	
	\begin{proof}
		Suppose that $P$ is orthogonally additive. Let $E^u$ denote the universal completion of $E$, which is well-known to be a semiprime $f$-algebra. Fix $f_1,\dots f_r\in E$. Let $C$ be the Archimedean $f$-subalgebra of $E^u$ generated by
		\[
		\left\{f_1,\dots,f_r,\mathfrak{S}(f_1,\dots,f_r)\right\}.
		\]
		Let $\omega\colon C\rightarrow\mathbb{R}$ be a nonzero multiplicative vector lattice homomorphism. Using \cite[Theorem~3.11]{BusSch} in the second equality below, we obtain
		\begin{align*}
			\omega\Biggl(\Bigl(\mathfrak{S}(f_1,...,f_r)\Bigr)^n\Biggr)&=\Biggl(\omega\Bigl(\mathfrak{S}(f_1,...,f_r)\Bigr)\Biggr)^n=\Biggl(\mathfrak{S}\Bigl(\omega(f_1),...,\omega(f_r)\Bigr)\Biggr)^n\\
			&=\left(\sqrt[n]{\sum_{k=1}^{r}\Bigl(\omega(f_k)\Bigr)^n}\right)^n=\sum_{k=1}^r\Bigl(\omega(f_k)\Bigr)^n=\omega\left(\sum_{k=1}^rf_k^n\right).
		\end{align*}
		Since the set of all nonzero multiplicative vector lattice homomorphisms $\omega\colon C\rightarrow\mathbb{R}$ separates the points of $C$ (see \cite[Corollary 2.7]{BusdPvR}), we have
		\begin{equation}\label{eq:trueinf-alg}
			\Bigl(\mathfrak{S}(f_1,...,f_r)\Bigr)^n=\sum_{k=1}^rf_k^n.
		\end{equation}
		
		By \cite[Theorem~4.1]{BoBus}, there exists a uniformly complete vector sublattice $F$ of $E^u$ and a vector lattice isomorphism $i\colon E^{\textnormal{\textcircled{$n$}} }\to F$ such that both
		\[
		\prod_{k=1}^{n}x_k\in F
		\]
		and
		\[
		i\circ\textnormal{\textcircled{$n$}} (x_1,\dots,x_n)=\prod_{k=1}^{n}x_k
		\]
		hold for all $x_1,\dots,x_n\in E$. It then follows from \eqref{eq:trueinf-alg} and the identity
		\[
		\textnormal{\textcircled{$n$}} (x_1,\dots,x_n)=i^{-1}\left(\prod_{k=1}^{n}x_k\right)\quad (x_1,...,x_n\in E)
		\]
		that
		\[
		\textnormal{\textcircled{$n$}} \Bigl(\underbrace{\mathfrak{S}(f_1,...,f_r),...,\mathfrak{S}(f_1,...,f_r)}_{n\ \text{copies}}\Bigr)=\sum_{k=1}^r\textnormal{\textcircled{$n$}}(\underbrace{f_k,...,f_k}_{n\ \text{copies}}).
		\]
		The result then follows from Theorem~\ref{T:KusaThm4}, as there exists a bounded linear operator $S\colon E^{\textnormal{\textcircled{$n$}} }\to Y$ for which
		\[
		P(x)=S\circ\textnormal{\textcircled{$n$}} (\underbrace{x,\dots,x}_{n\ \text{copies}})
		\]
		holds for every $x\in E$.
	\end{proof}
	
	We additionally require the following lemma, which is analogous to \cite[Proposition~1.13]{BerHui2} by Bernau and Huijsmans, mentioned in the introduction. The proof consists of a familiar point-separating argument, similar to what was presented in the proof of Theorem~\ref{T:funcal}. A more general result is later proved in Theorem~\ref{T:ortho}.
	
	\begin{lemma}\label{L:mult}
		Let $n\in\mathbb{N}$, and let $A$ be an Archimedean semiprime $f$-algebra. Then
		\[
		\prod_{k=1}^{n}a_k=\prod_{k=1}^{n}\mathcal{M}_k(a_1,...,a_n)
		\]
		holds for all $a_1,...,a_n\in A$.
	\end{lemma}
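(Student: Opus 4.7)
The plan is to adapt the point-separating argument from the proof of Theorem~\ref{T:funcal}, with nonzero real-valued vector lattice homomorphisms replaced by nonzero multiplicative vector lattice homomorphisms on a suitable finitely generated Archimedean $f$-subalgebra of $A$. Fix $a_1,\ldots,a_n\in A$, and let $C$ denote the Archimedean $f$-subalgebra of $A$ generated by $\{a_1,\ldots,a_n\}$. Since $C$ is closed under the vector lattice operations and under multiplication, each $\mathcal{M}_k(a_1,\ldots,a_n)$ lies in $C$ (being built from iterated finite suprema and infima of the $a_i$), and so do both products $\prod_{k=1}^n a_k$ and $\prod_{k=1}^n \mathcal{M}_k(a_1,\ldots,a_n)$.

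Let $\omega\colon C\to\mathbb{R}$ be an arbitrary nonzero multiplicative vector lattice homomorphism. Because $\omega$ commutes with finite $\wedge$ and $\vee$, and because $\mathcal{M}_k$ is constructed purely from these operations, we have
\[
\omega\bigl(\mathcal{M}_k(a_1,\ldots,a_n)\bigr)=\mathcal{M}_k\bigl(\omega(a_1),\ldots,\omega(a_n)\bigr)\quad (k\in[n]).
\]
The scalars $\omega(a_1),\ldots,\omega(a_n)$ form a totally ordered subset of $\mathbb{R}$, so by Proposition~\ref{P:3ton}$(i)$ and $(ii)$ the tuple $\bigl(\mathcal{M}_1(\omega(a_1),\ldots,\omega(a_n)),\ldots,\mathcal{M}_n(\omega(a_1),\ldots,\omega(a_n))\bigr)$ is a permutation of $(\omega(a_1),\ldots,\omega(a_n))$. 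Invoking commutativity of multiplication in $\mathbb{R}$ and multiplicativity of $\omega$ then gives
\[
\omega\Bigl(\prod_{k=1}^n a_k\Bigr)=\prod_{k=1}^n\omega(a_k)=\prod_{k=1}^n\omega\bigl(\mathcal{M}_k(a_1,\ldots,a_n)\bigr)=\omega\Bigl(\prod_{k=1}^n\mathcal{M}_k(a_1,\ldots,a_n)\Bigr).
\]

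Since $A$ is semiprime, \cite[Corollary~2.7]{BusdPvR} (applied exactly as in the proof of Theorem~\ref{T:Kusa}) guarantees that the nonzero multiplicative vector lattice homomorphisms $\omega\colon C\to\mathbb{R}$ separate the points of $C$, so the displayed equality of images under every such $\omega$ forces $\prod_{k=1}^n a_k=\prod_{k=1}^n \mathcal{M}_k(a_1,\ldots,a_n)$. There is no substantial obstacle here; the only subtlety is making sure $C$ is chosen as an $f$-subalgebra (rather than merely a vector sublattice as in the proof of Theorem~\ref{T:funcal}) so that the products on both sides lie in $C$ and the appropriate point-separation theorem applies.
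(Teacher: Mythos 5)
Your proposal is correct and follows essentially the same route as the paper: pass to the Archimedean $f$-subalgebra $C$ generated by $\{a_1,\ldots,a_n\}$, apply a nonzero multiplicative vector lattice homomorphism $\omega\colon C\to\mathbb{R}$, use that the $\omega(a_k)$ are totally ordered reals so the $\mathcal{M}_k$ values are a rearrangement, and conclude by the point-separation result \cite[Corollary~2.7]{BusdPvR}. The only cosmetic difference is that you deduce the rearrangement directly from Proposition~\ref{P:3ton}$(i)$--$(ii)$, whereas the paper cites Remark~\ref{R:totordsetssame}; these are equivalent observations.
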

	
	\begin{proof}
		Let $a_1,...,a_n\in A$, and consider the Archimedean $f$-subalgebra $C$ of $A$ generated by $\{a_1,...,a_n\}$. Suppose $\omega\colon C\to\mathbb{R}$ is a nonzero multiplicative homomorphism. Using the fact that $\{\omega(a_1),...,\omega(a_n)\}$ is a totally ordered set of real numbers together with Remark~\ref{R:totordsetssame} in the second identity below, we have
		\begin{align*}
			\omega\left(\prod_{k=1}^{n}a_k\right)&=\prod_{k=1}^{n}\omega(a_k)=\prod_{k=1}^{n}\mathcal{M}_k\Biggl(\omega(a_1),...,\omega(a_n)\Biggr)\\
			&=\prod_{k=1}^{n}\omega\Bigl(\mathcal{M}_k(a_1,...,a_n)\Bigr)=\omega\left(\prod_{k=1}^{n}\mathcal{M}_k(a_1,...,a_n)\right).
		\end{align*}
		Since the set of all nonzero multiplicative vector lattice homomorphisms $\omega\colon C\to\mathbb{R}$ separates the points of $C$ (see \cite[Corollary~2.7]{BusdPvR}), we have
		\[
		\prod_{k=1}^{n}a_k=\prod_{k=1}^{n}\mathcal{M}_k(a_1,...,a_n).
		\]
	\end{proof}
	
	We will use the following notation throughout the remainder of the paper.
	
	\begin{notation} For $m,n\in\mathbb{N}$ with $m\leq n$, $E$ uniformly complete, $Y$ a real separated convex bornologial space, a map $T\colon E^n\to Y$, $f_1,...,f_m\in E$, and $k_1,...,k_m\in\{0,1,2,...,n\}$ for which $\sum_{i=1}^{m}k_i=n$, we write
		\[
		T(f_1^{k_1}...f_m^{k_m})=T(\underbrace{f_1,\dots,f_1}_{k_1\ \text{copies}},\dots,\underbrace{f_m,\dots,f_m}_{k_m\ \text{copies}}).
		\]
	\end{notation}
	
	Using the technique employed in the proof of Kusraeva's Theorem~\ref{T:Kusa} along with Theorem~\ref{T:genorthosym}, we show that a bounded multilinear map is orthosymmetric if and only if it is total orderization invariant. As mentioned in the introduction, the equivalence $(i)\iff(ii)$ in Theorem~\ref{T:ortho} below is known for the special case when $E$ is a Banach lattice and $Y=\mathbb{R}$ (see \cite[Proposition~2]{BoyRySniga}).
	
	\begin{theorem}\label{T:ortho}
		Fix $n\in\mathbb{N}$. Let $E$ be uniformly complete and $Y$ a real separated convex bornological space. Suppose $T\colon E^n\to Y$ is a bounded $n$-linear map. The following are equivalent.
		
		\begin{itemize}
			\item[$(i)$] $T$ is orthosymmetric,
			\item[$(ii)$] $T$ is total orderization invariant, and
			\item[$(iii)$] $T$ is positively total orderization invariant.
		\end{itemize}
	\end{theorem}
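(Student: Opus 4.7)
The plan is to establish the cycle $(ii)\Rightarrow(iii)\Rightarrow(i)\Rightarrow(ii)$. The implication $(ii)\Rightarrow(iii)$ is immediate because positive total orderization invariance is by definition the restriction of total orderization invariance to $(E^+)^n$, and the operations $\mathcal{M}_k$ leave the positive cone invariant (being lattice operations).

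For $(iii)\Rightarrow(i)$, I would apply Theorem~\ref{T:genorthosym} to the distributive lattice $L = E^+$ (which has smallest element $\theta = 0$) with $a = 0$ and with $T|_{(E^+)^n}$. The multilinearity of $T$ gives $T(0, f_1, \ldots, f_{n-1}) = 0$ for all positive $f_i$, so condition $(ii)$ of Theorem~\ref{T:genorthosym} is satisfied. Consequently $T(f_1, \ldots, f_n) = 0$ whenever $f_1, \ldots, f_n \in E^+$ and $f_i \wedge f_j = 0$ for some $i, j \in [n]$. To extend this to general elements, I would decompose each $f_k = f_k^+ - f_k^-$ and expand $T(f_1, \ldots, f_n)$ by $n$-linearity into a signed sum of $2^n$ terms $T(g_1, \ldots, g_n)$ with $g_k \in \{f_k^+, f_k^-\}$. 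If $|f_i| \wedge |f_j| = 0$, then for every such choice $g_i \wedge g_j \leq |f_i| \wedge |f_j| = 0$, so each summand vanishes and orthosymmetry follows.

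The main obstacle, and the heart of the argument, lies in $(i)\Rightarrow(ii)$. Here I would invoke Theorem~\ref{T:KusaProofofThm4} to linearize $T$ through the vector lattice $n$-power: there exists a bounded linear operator $S\colon E^{\textnormal{\textcircled{$n$}}} \to Y$ with $T(f_1, \ldots, f_n) = S \circ \textnormal{\textcircled{$n$}}(f_1, \ldots, f_n)$ for all $f_i \in E$. It then suffices to prove the symbolic identity
\[
\textnormal{\textcircled{$n$}}(f_1, \ldots, f_n) = \textnormal{\textcircled{$n$}}\Bigl(\mathcal{M}_1(f_1, \ldots, f_n), \ldots, \mathcal{M}_n(f_1, \ldots, f_n)\Bigr)
\]
inside $E^{\textnormal{\textcircled{$n$}}}$. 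To verify this I would mimic the strategy used in the proof of Theorem~\ref{T:Kusa}: invoke \cite[Theorem~4.1]{BoBus} to obtain a uniformly complete vector sublattice $F \subseteq E^u$ together with a vector lattice isomorphism $i\colon E^{\textnormal{\textcircled{$n$}}} \to F$ satisfying $i\circ\textnormal{\textcircled{$n$}}(x_1, \ldots, x_n) = \prod_{k=1}^n x_k$. The universal completion $E^u$ is a semiprime $f$-algebra, so Lemma~\ref{L:mult} applies inside $E^u$ and yields $\prod_{k=1}^n f_k = \prod_{k=1}^n \mathcal{M}_k(f_1, \ldots, f_n)$. Since $i$ is a lattice isomorphism, it commutes with the $\mathcal{M}_k$'s; transporting the identity back along $i^{-1}$ gives the required equality in $E^{\textnormal{\textcircled{$n$}}}$. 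Applying $S$ to both sides closes the argument.

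The delicate step is recognizing that total orderization invariance at the level of $T$ reduces, via Kusraeva's linearization, to the purely algebraic identity already supplied by Lemma~\ref{L:mult} in the $f$-algebra setting; all the multilinear, bornological, and uniform-completeness hypotheses are used precisely to justify this reduction. Once the factorization $T = S \circ \textnormal{\textcircled{$n$}}$ is in hand, the rest is a bookkeeping exercise transporting an identity between the $f$-algebra $F \subseteq E^u$ and the abstract $n$-power $E^{\textnormal{\textcircled{$n$}}}$.
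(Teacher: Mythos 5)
Your proof is correct, and two of its three implications match the paper closely. The implications $(i)\Rightarrow(ii)$ and $(ii)\Rightarrow(iii)$ follow exactly the paper's route (linearize via Theorem~\ref{T:KusaProofofThm4}, invoke \cite[Theorem~4.1]{BoBus} and Lemma~\ref{L:mult} to get $\textnormal{\textcircled{$n$}}(f_1,\dots,f_n)=\textnormal{\textcircled{$n$}}(\mathcal{M}_1,\dots,\mathcal{M}_n)$, then apply $S$), though your phrase ``$i$ commutes with the $\mathcal{M}_k$'s'' is slightly off: the $\mathcal{M}_k$ are computed in $E$, and the relevant step is simply evaluating $i\circ\textnormal{\textcircled{$n$}}$ at the $\mathcal{M}_k(f_1,\dots,f_n)\in E$, not any commutation of $i$ with lattice operations.

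The genuinely different step is $(iii)\Rightarrow(i)$. The paper first proves symmetry of $T$ on all of $E^n$ by permutation and decomposition, then forms the polynomial $P_T(f)=T(f,\dots,f)$, uses the binomial theorem together with the positive-cone disjointness identity to show $P_T$ is positively orthogonally additive, imports \cite[Theorem~2.3]{Sch} to upgrade this to orthogonal additivity, and finally imports \cite[Lemma~4]{Kusa2} to conclude $T$ is orthosymmetric. You instead pass directly from the positive-cone identity to full orthosymmetry: expanding $T(f_1^+-f_1^-,\dots,f_n^+-f_n^-)$ by $n$-linearity gives a signed sum of terms $T(g_1,\dots,g_n)$ with $g_k\in\{f_k^+,f_k^-\}\subseteq E^+$, and whenever $|f_i|\wedge|f_j|=0$ each such term has $0\leq g_i\wedge g_j\leq|f_i|\wedge|f_j|=0$, hence vanishes by the conclusion of Theorem~\ref{T:genorthosym}. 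This is shorter and more self-contained, dispensing entirely with the polynomial machinery and the two external citations; it is essentially the same decomposition argument the paper later employs in the proof of Corollary~\ref{C:troitsky} for $(iii)\Rightarrow(ii)$, so it sits comfortably within the paper's own toolbox. What the paper's longer route buys is an explicit bridge to the orthogonally-additive-polynomial literature, but as a proof of this theorem your argument is the more economical one.
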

	
	\begin{proof}
		$(i)\implies(ii)$ As previously, let $E^u$ denote the universal completion of $E$, and recall that $E^u$ is a semiprime $f$-algebra. Suppose that $f_1,\dots f_n\in E$. We claim that
		\[
		\textnormal{\textcircled{$n$}} (f_1,...,f_n)=\textnormal{\textcircled{$n$}} \Bigl(\mathcal{M}_1(f_1,...,f_n),\mathcal{M}_2(f_1,...,f_n),\dots,\mathcal{M}_n(f_1,...,f_n)\Bigr).
		\]
		Indeed, by \cite[Theorem~4.1]{BoBus}, there exists a uniformly complete vector sublattice $F$ of $E^u$ and a vector lattice isomorphism $i\colon E^{\textnormal{\textcircled{$n$}} }\to F$ such that both
		\[
		\prod_{k=1}^{n}x_k\in F
		\]
		and
		\[
		i\circ\textnormal{\textcircled{$n$}} (x_1,\dots,x_n)=\prod_{k=1}^{n}x_k
		\]
		hold for all $x_1,\dots,x_n\in E$.
		
		It follows from this fact and Lemma~\ref{L:mult} that 
		\begin{align*}
			\textnormal{\textcircled{$n$}} (f_1,...,f_n)&=i^{-1}\left(\prod_{k=1}^{n}f_k\right)\\
			&=i^{-1}\left(\prod_{k=1}^{n}\mathcal{M}_k(f_1,...,f_n)\right)\\
			&=\textnormal{\textcircled{$n$}} \Bigl(\mathcal{M}_1(f_1,...,f_n),\mathcal{M}_2(f_1,...,f_n),\dots,\mathcal{M}_n(f_1,...,f_n)\Bigr).
		\end{align*}
		
		By Theorem~\ref{T:KusaProofofThm4}, there exists a bounded linear operator $S\colon E^{\textnormal{\textcircled{$n$}} }\to Y$ for which
		\[
		T(x_1,\dots,x_n)=S\circ\textnormal{\textcircled{$n$}} (x_1,\dots,x_n)
		\]
		holds for every $x_1,\dots,x_n\in E$. It thus follows that
		\begin{align*}
			T(f_1,...,f_n)&=S\circ\textnormal{\textcircled{$n$}} (f_1,...,f_n)\\
			&=S\circ\textnormal{\textcircled{$n$}} \Bigl(\mathcal{M}_1(f_1,...,f_n),\mathcal{M}_2(f_1,...,f_n),\dots,\mathcal{M}_n(f_1,...,f_n)\Bigr)\\
			&=T\Bigl(\mathcal{M}_1(f_1,...,f_n),\mathcal{M}_2(f_1,...,f_n),\dots,\mathcal{M}_n(f_1,...,f_n)\Bigr).
		\end{align*}
		
		$(ii)\implies(iii)$ Trivial.
		
		$(iii)\implies(i)$ Suppose that 
		\[
		T(f_1,...,f_n)=T\Bigl(\mathcal{M}_1(f_1,...,f_n),\mathcal{M}_2(f_1,...,f_n),\dots,\mathcal{M}_n(f_1,...,f_n)\Bigr)
		\]
		holds for all $f_1,...,f_n\in E^+$. Note that the $n$-linearity of $T$ implies that $T(0,f_1,...,f_{n-1})=0$ holds for all $f_1,...,f_{n-1}\in E^+$. Therefore, it follows from Theorem~\ref{T:genorthosym} (with $L=E^+, \theta=0$, and $a=0$) that
		\begin{equation}\label{eq:posortho}
			T(f_1,...,f_n)=0\quad (f_1,...,f_n\in E^+\ \textnormal{with}\ f_i\wedge f_j=0\ \textnormal{for some}\ i,j\in[n]).
		\end{equation}
		
		Next we claim that $T$ is symmetric. Indeed, given a permutation $\sigma$ on $[n]$, for all $f_1,...,f_n\in E^+$ we have from Proposition~\ref{P:invarimpsym} that
		\begin{align*}
			T(f_1,...,f_n)=T\Bigl(f_{\sigma(1)},...,f_{\sigma(n)}\Bigr).
		\end{align*}
		Then using the $n$-linearity of $T$, we have for each $f_1,...,f_n\in E$ that
		\begin{align*}
			T&(f_1,...,f_n)=T(f_1^+-f_1^-,...,f_n^+-f_n^-)\\
			&=\sum_{k_1,...,k_n\in\{0,1\}}T\Bigl((f_1^+)^{k_1}(-f_1^-)^{1-k_1}\cdots(f_n^+)^{k_n}(-f_n^-)^{1-k_n}\Bigr)\\
			&=(-1)^{n-\sum_{i=1}^{n}k_i}\sum_{k_1,...,k_n\in\{0,1\}}T\Bigl((f_1^+)^{k_1}(f_1^-)^{1-k_1}\cdots(f_n^+)^{k_n}(f_n^-)^{1-k_n}\Bigr)\\
			&=(-1)^{n-\sum_{i=1}^{n}k_i}\sum_{k_1,...,k_n\in\{0,1\}}T\Bigl((f_{\sigma(1)}^+)^{k_{\sigma(1)}}(f_{\sigma(1)}^-)^{1-k_{\sigma(1)}}\cdots(f_{\sigma(n)}^+)^{k_{\sigma(n)}}(f_{\sigma(n)}^-)^{1-k_{\sigma(n)}}\Bigr)\\
			&=T(f_{\sigma(1)},...,f_{\sigma(n)}).
		\end{align*}
		Thus the map
		\[
		P_T(f)=T(\underbrace{f,\dots,f}_{n\ \text{copies}})\quad (f\in E)
		\]
		is an $n$-homogeneous polynomial with generating symmetric $n$-linear map $T$. We know from \cite[Corollary~1]{Kusa2} that $P_T$ is also bounded. For $f,g\in E^+$  with $f\wedge g=0$, the binomial theorem together with \eqref{eq:posortho} yield
		\begin{align*}
			P_T(f+g)&=P_T(f)+P_T(g)+\sum_{k=1}^{n}\binom{n}{k}T(f^{n-k}g^k)\\
			&=P_T(f)+P_T(g).
		\end{align*}
		Thus $P_T$ is positively orthogonally additive. Then $P_T$ is orthogonally additive by \cite[Theorem 2.3]{Sch}. By \cite[Lemma 4]{Kusa2}, $T$ is orthosymmetric.
	\end{proof}
	
	As gleaned from Corollaries~\ref{C:orthofuncal}\&\ref{C:steadyfuncal}, one advantage to studying total orderization invariance is that this theory leads to the discovery of new properties for certain functions. Along these lines, we obtain another characterization of bounded orthosymmetric multilinear maps.
	
	\begin{corollary}\label{C:troitsky}
		Let $E$ be uniformly complete, assume $Y$ is a real separated convex bornological space, and suppose $T\colon E^n\to Y$ is a bounded $n$-linear map. The following are equivalent.
		\begin{itemize}
			\item[$(i)$] $T$ is orthosymmetric,
			\item[$(ii)$] $T(f_1,...,f_n)=0$ whenever $f_1,...,f_n\in E$ satisfy $\bigwedge_{k=1}^n |f_{k}|=0$, and
			\item[$(iii)$] $T(f_1,...,f_n)=0$ whenever $f_1,...,f_n\in E^+$ satisfy $\bigwedge_{k=1}^nf_{k}=0$.
		\end{itemize}
	\end{corollary}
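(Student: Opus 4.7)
The plan is to establish the cycle $(iii)\Rightarrow(ii)\Rightarrow(i)\Rightarrow(iii)$. The implication $(ii)\Rightarrow(i)$ is immediate: whenever a pair $i,j\in[n]$ satisfies $|f_i|\wedge|f_j|=0$, monotonicity of infima gives $\bigwedge_{k=1}^n|f_k|\leq|f_i|\wedge|f_j|=0$, so $(ii)$ applies.

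For $(iii)\Rightarrow(ii)$, I would expand $T(f_1,\dots,f_n)$ by $n$-linearity after writing each $f_k=f_k^+-f_k^-$. This produces a signed sum of $2^n$ terms of the form $\pm T(g_1,\dots,g_n)$, each with $g_k\in\{f_k^+,f_k^-\}\subseteq E^+$. Since $g_k\leq|f_k|$ for every $k$, we have $\bigwedge_{k=1}^n g_k\leq\bigwedge_{k=1}^n|f_k|=0$, and hypothesis $(iii)$ annihilates every term.

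The main content lies in $(i)\Rightarrow(iii)$. Assume $T$ is orthosymmetric; Theorem~\ref{T:KusaProofofThm4} yields a bounded linear operator $S\colon E^{\textnormal{\textcircled{$n$}}}\to Y$ with $T=S\circ\textnormal{\textcircled{$n$}}$. Following the realization deployed in the proof of Theorem~\ref{T:ortho}, \cite[Theorem~4.1]{BoBus} identifies $E^{\textnormal{\textcircled{$n$}}}$ with a uniformly complete sublattice $F$ of the universal completion $E^u$ via an isomorphism $i$ satisfying $i\circ\textnormal{\textcircled{$n$}}(f_1,\dots,f_n)=\prod_{k=1}^n f_k$. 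Given positive $f_1,\dots,f_n\in E$ with $\bigwedge_{k=1}^n f_k=0$, it therefore suffices to show $\prod_{k=1}^n f_k=0$ in $E^u$. I would mirror the point-separation argument of Lemma~\ref{L:mult}: letting $\omega$ range over the nonzero multiplicative vector lattice homomorphisms on the Archimedean semiprime $f$-subalgebra of $E^u$ generated by $\{f_1,\dots,f_n\}$, the vector lattice property of $\omega$ gives $\bigwedge_{k=1}^n\omega(f_k)=\omega\bigl(\bigwedge_{k=1}^n f_k\bigr)=0$ in $\mathbb{R}$, so some $\omega(f_{k_0})=0$ and hence $\omega\bigl(\prod_{k=1}^n f_k\bigr)=\prod_{k=1}^n\omega(f_k)=0$. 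By \cite[Corollary~2.7]{BusdPvR} such homomorphisms separate the points of this $f$-subalgebra, so $\prod_{k=1}^n f_k=0$ and thus $T(f_1,\dots,f_n)=S(0)=0$. The principal obstacle is precisely this $n$-variable extension of the identity ``$f\wedge g=0\Rightarrow fg=0$'' in semiprime $f$-algebras, which is handled cleanly by the same point-separation technique already employed in Lemma~\ref{L:mult}.
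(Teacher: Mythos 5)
Your argument is correct, and the implications $(iii)\Rightarrow(ii)$ (positive/negative-part decomposition) and $(ii)\Rightarrow(i)$ (trivial) match the paper. The genuine difference is in $(i)\Rightarrow(iii)$: the paper obtains it in two lines as a direct application of Theorem~\ref{T:ortho}, writing $T(f_1,\dots,f_n)=T\bigl(\mathcal{M}_1(f_1,\dots,f_n),\dots,\mathcal{M}_n(f_1,\dots,f_n)\bigr)=T(0,\mathcal{M}_2,\dots,\mathcal{M}_n)=0$ by $n$-linearity, whereas you bypass the total orderization machinery entirely and go back to the representation $T=S\circ\textnormal{\textcircled{$n$}}$, identify $\textnormal{\textcircled{$n$}}$ with the $f$-algebra product in $E^u$ via \cite[Theorem~4.1]{BoBus}, and run the point-separation argument of Lemma~\ref{L:mult} to show the elementary $f$-algebra fact that $\bigwedge_{k=1}^n f_k=0$ forces $\prod_{k=1}^n f_k=0$. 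Both routes rest on the same Kusraeva representation and on \cite[Corollary~2.7]{BusdPvR}, but yours is more self-contained and isolates the semiprime-$f$-algebra identity actually doing the work, while the paper's is shorter given that Theorem~\ref{T:ortho} (which already encapsulates that identity through Lemma~\ref{L:mult}) is available. In your proof of $(i)\Rightarrow(iii)$ it is worth noting explicitly that finite infima computed in $E$ agree with those in $E^u$, since $E$ embeds as a sublattice, so $\bigwedge_{k=1}^n f_k=0$ does hold in $E^u$; this is implicit in your write-up and needed before applying $\omega$.
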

	
	\begin{proof}
		We first prove $(i)\implies(iii)$. To this end, assume $T$ is orthosymmetric and let $f_1,...,f_n\in E^+$ be such that $\mathcal{M}_1(f_1,...,f_n)=\bigwedge_{k=1}^nf_{k}=0$. From Theorem~\ref{T:ortho} we obtain
		\begin{align*}
			T(f_1,...,f_n)&=T\Bigl(\mathcal{M}_1(f_1,...,f_n),\mathcal{M}_2(f_1,...,f_n),\dots,\mathcal{M}_n(f_1,...,f_n)\Bigr)\\
			&=T\Bigl(0,\mathcal{M}_2(f_1,...,f_n),\dots,\mathcal{M}_n(f_1,...,f_n)\Bigr)\\
			&=0.
		\end{align*}
		
		Next we verify $(iii)\implies(ii)$. For this task assume $(iii)$ holds, and suppose $f_1,...,f_n\in E$ satisfy $\bigwedge_{k=1}^n|f_k|=0$. Recall that $0\leq f_k^+,f_k^-\leq|f_k|$ holds for each $k\in[n]$. It then follows from assumption $(iii)$ that for any $k_1,...,k_n\in\{0,1\}$ we have
		\[
		T\Bigl((f_1^+)^{k_1}(f_1^-)^{1-k_1}\cdots(f_n^+)^{k_n}(f_n^-)^{1-k_n}\Bigr)=0.
		\]
		We thus obtain
		\begin{align*}
			T(f_1,...,f_n)&=T(f_1^+-f_1^-,...,f_n^+-f_n^-)\\
			&=\sum_{k_1,...,k_n\in\{0,1\}}T\Bigl((f_1^+)^{k_1}(-f_1^-)^{1-k_1}\cdots(f_n^+)^{k_n}(-f_n^-)^{1-k_n}\Bigr)\\
			&=(-1)^{n-\sum_{i=1}^{n}k_i}\sum_{k_1,...,k_n\in\{0,1\}}T\Bigl((f_1^+)^{k_1}(f_1^-)^{1-k_1}\cdots(f_n^+)^{k_n}(f_n^-)^{1-k_n}\Bigr)\\
			&=0.
		\end{align*}
		
		We conclude the proof by noting the implication $(ii)\implies(i)$ is evident.
	\end{proof}
	
	\begin{remark}
		The logical equivalence $(i)\iff(ii)$ in Corollary~\ref{C:troitsky} is contained in \cite[Proposition~3.38]{Rob} by Roberts via a different proof. Maps that satisfy the conditions of Corollary~\ref{C:troitsky}$(ii)$ are referred to in \cite{Rob} as \textit{jointly orthosymmetric}.
	\end{remark}
	
	Next we turn our focus to orthogonally steady power sum polynomials. We obtain the following characterization of bounded orthogonally steady power sum polynomials using Theorem~\ref{T:genorthsteady}, Theorem~\ref{T:funcal}, and Theorem~\ref{T:Kusa}. For $E$ a Banach lattice and $Y=\mathbb{R}$, the equivalence $(i)\iff(ii)$ in Theorem~\ref{T:vals} is known and stated in terms of orthogonally additive polynomials in \cite[Remark (7) following Proposition~4]{BoyRySniga}.
	
	\begin{theorem}\label{T:vals}
		Fix $n,r\in\mathbb{N}$ with $r\geq 2$. Let $E$ be uniformly complete, and let $Y$ be a real separated convex bornological space. Suppose $S\colon E^r\to Y$ is a bounded power sum polynomial of degree $n$ in $r$ variables. The following are equivalent.
		\begin{itemize}
			\item[$(i)$] $S$ is orthogonally steady,
			\item[$(ii)$] $S$ is total orderization invariant, and
			\item[$(iii)$] $S$ is positively total orderization invariant. 
		\end{itemize}
	\end{theorem}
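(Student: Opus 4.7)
The plan is to prove the cyclic chain of implications $(i)\Rightarrow(ii)\Rightarrow(iii)\Rightarrow(i)$. Throughout, write $P\colon E\to Y$ for the generating $n$-homogeneous polynomial of $S$, so that $S(u_1,\dots,u_r)=\sum_{k=1}^{r}P(u_k)$. The implication $(ii)\Rightarrow(iii)$ is immediate from the definitions.

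For $(i)\Rightarrow(ii)$, I would combine Lemma~\ref{L:steadyiffadditive} with Kusraeva's Theorem~\ref{T:Kusa} and Theorem~\ref{T:funcal}. By Lemma~\ref{L:steadyiffadditive}, orthogonal steadiness of $S$ amounts to orthogonal additivity of $P$, so Theorem~\ref{T:Kusa} gives
\[
S(f_1,\dots,f_r)=\sum_{k=1}^{r}P(f_k)=P\bigl(\mathfrak{S}(f_1,\dots,f_r)\bigr)\qquad(f_1,\dots,f_r\in E).
\]
Since $\mathfrak{S}$ is symmetric, continuous, and positively homogeneous on $\mathbb{R}^r$, and $E$ is uniformly complete (hence $\mathfrak{S}$-complete), Theorem~\ref{T:funcal} tells us that $\mathfrak{S}\colon E^r\to E$ is itself total orderization invariant. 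Applying this invariance inside $P$, and then invoking Theorem~\ref{T:Kusa} a second time to the total orderization $(\mathcal{M}_k(f_1,\dots,f_r))_{k=1}^{r}$, yields
\[
S(f_1,\dots,f_r)=P\bigl(\mathfrak{S}(\mathcal{M}_1(f_1,\dots,f_r),\dots,\mathcal{M}_r(f_1,\dots,f_r))\bigr)=\sum_{k=1}^{r}P(\mathcal{M}_k(f_1,\dots,f_r))=S(\mathcal{M}_1(f_1,\dots,f_r),\dots,\mathcal{M}_r(f_1,\dots,f_r)),
\]
which is exactly total orderization invariance of $S$.

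For $(iii)\Rightarrow(i)$, I would invoke Theorem~\ref{T:genorthsteady} with $L=E^+$, $\theta=0$, $A=Y$, and $\phi=P|_{E^+}$. Since $n\geq 1$ forces $P(0)=0$, we have $S(\underbrace{0,\dots,0}_{r-1\ \text{copies}},x)=P(x)=\phi(x)$ for every $x\in E^+$, so condition $(ii)$ of Theorem~\ref{T:genorthsteady} holds. That theorem then delivers the identity
\[
\sum_{k=1}^{r}P(f_k)=S(f_1,\dots,f_r)=P\Biggl(\bigvee_{k=1}^{r}f_k\Biggr)=P\Biggl(\sum_{k=1}^{r}f_k\Biggr)
\]
for every pairwise disjoint collection $f_1,\dots,f_r\in E^+$. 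Setting $f_1=f$, $f_2=g$ and $f_3=\cdots=f_r=0$ for arbitrary $f,g\in E^+$ with $f\wedge g=0$ collapses this to $P(f+g)=P(f)+P(g)$, so $P$ is positively orthogonally additive. Hence $P$ is orthogonally additive by \cite[Theorem~2.3]{Sch}, and Lemma~\ref{L:steadyiffadditive} promotes this to orthogonal steadiness of $S$.

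The only real obstacle I anticipate is the bookkeeping needed to ensure that $P$ inherits boundedness from $S$ so that Theorem~\ref{T:Kusa} and \cite[Theorem~2.3]{Sch} apply; this should follow routinely from $P(f)=S(f,0,\dots,0)$ together with the structure of the convex bornological codomain, and otherwise the argument is a clean assembly of previously established tools.
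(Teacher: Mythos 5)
Your proposal is correct and mirrors the paper's proof almost step for step: $(i)\Rightarrow(ii)$ via Lemma~\ref{L:steadyiffadditive}, Theorem~\ref{T:Kusa}, and the total orderization invariance of $\mathfrak{S}$ from Theorem~\ref{T:funcal}; $(iii)\Rightarrow(i)$ via Theorem~\ref{T:genorthsteady} applied on $E^+$, reduction to the two-variable disjoint case, \cite[Theorem~2.3]{Sch}, and Lemma~\ref{L:steadyiffadditive}. The only item you flag as a potential loose end --- boundedness of $P$ --- is handled exactly as you suggest in the paper, by observing $P(f)=S(\underbrace{0,\dots,0}_{r-1},f)$ and using boundedness of $S$, so there is no real gap.
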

	
	\begin{proof}
		$(i)\implies(ii)$ Suppose that $S$ is orthogonally steady. As in Theorem~\ref{T:Kusa}, let $\mathfrak{S}$ be defined on $E$ by functional calculus. Let $f_1,...,f_r\in E$. Denote by $P$ the generating $n$-homogeneous polynomial for $S$. As noted in Lemma~\ref{L:steadyiffadditive}, $P$ is orthogonally additive. Moreover, it follows from the identity
		\[
		P(f)=S(\underbrace{0,\dots,0}_{r-1\ \text{copies}},f)\quad (f\in E)
		\]
		and the boundedness of $S$ that $P$ is bounded. Thus by Theorem~\ref{T:Kusa}, we have
		\[
		P\Bigl(\mathfrak{S}(f_1,...,f_r)\Bigr)=\sum_{k=1}^{r}P(f_k).
		\]
		From this result and Theorem~\ref{T:funcal} we get
		\begin{align*}
			S(f_1,...,f_r)&=\sum_{k=1}^{r}P(f_k)=P\Bigl(\mathfrak{S}(f_1,...,f_r)\Bigr)\\
			&=P\Biggl(\mathfrak{S}\Bigl(\mathcal{M}_1(f_1,...,f_r),\mathcal{M}_2(f_1,...,f_r),\dots,\mathcal{M}_r(f_1,...,f_r)\Bigr)\Biggr)\\
			&=\sum_{k=1}^{r}P\Bigl(\mathcal{M}_k(f_1,...,f_r)\Bigr)\\
			&=S\Bigl(\mathcal{M}_1(f_1,...,f_r),...,\mathcal{M}_r(f_1,...,f_r)\Bigr).
		\end{align*}
		
		$(ii)\implies(iii)$ Trivial.
		
		$(iii)\implies(i)$ Assume $S(f_1,...,f_r)=S\Bigl(\mathcal{M}_1(f_1,...,f_r),...,\mathcal{M}_r(f_1,...,f_r)\Bigr)$ holds for all $f_1,...,f_r\in E^+$. Since
		\[
		S(\underbrace{0,\dots,0}_{r-1\ \text{copies}},f)=P(f)
		\]
		holds for every $f\in E^+$, it follows from Theorem~\ref{T:genorthsteady} that
		\[
		S(f_1,...,f_r)=P\left(\bigvee_{k=1}^rf_k\right)
		\]
		whenever $f_1,...,f_r\in E^+$ satisfy $f_i\wedge f_j=0$ for all $i,j\in[r]$ with $i\neq j$.
		
		Consider $f_1,f_2\in E^+$ with $f_1\wedge f_2=0$. If $r\geq 3$, then additionally set $f_k=0$ for all integers $3\leq k\leq r$. Since $\{f_1,...,f_r\}$ is a pairwise disjoint subset of $E^+$, we get
		\[
		P(f_1+f_2)=P(f_1\vee f_2)=P\left(\bigvee_{k=1}^rf_k\right)=S(f_1,...,f_r)=\sum_{k=1}^{r}P(f_k)=P(f_1)+P(f_2).
		\]
		Thus $P$ is positively orthogonally additive. Hence $P$ is orthogonally additive by \cite[Theorem 2.3]{Sch}, and from Lemma~\ref{L:steadyiffadditive} we conclude that $S$ is orthogonally steady.
	\end{proof}

As an immediate corollary, we attain the following characterization for bounded orthogonally additive polynomials which generalizes \cite[Remark (7) following Proposition~4]{BoyRySniga}, a result which extends the notion of $n$-homogeneous polynomial valuations (see \cite[Section~2]{BusRob}) to any finite number of summands. A map $P\colon E\to V$, with $V$ a real vector space, is called a \textit{polynomial valuation} if $P$ is an $n$-homogeneous polynomial for some $n\in\mathbb{N}$ and a valuation, meaning that $P(f)+P(g)=P(f\wedge g)+P(f\vee g)$ holds for every $f,g\in E$.
	
	\begin{corollary}\label{C:vals}
		Fix $n,r\in\mathbb{N}$ with $r\geq 2$. Let $E$ be uniformly complete, and let $Y$ be a real separated convex bornological space. Suppose $P\colon E\to Y$ is a bounded $n$-homogeneous polynomial. The following are equivalent.
		\begin{itemize}
			\item[$(i)$] $P$ is orthogonally additive,
			\item[$(ii)$] $\sum_{k=1}^{r}P(f_k)=\sum_{k=1}^{r}P\Bigl(\mathcal{M}_k(f_1,...,f_r)\Bigr)$ holds for every $f_1,...,f_r\in E$, and
			\item[$(iii)$] $\sum_{k=1}^{r}P(f_k)=\sum_{k=1}^{r}P\Bigl(\mathcal{M}_k(f_1,...,f_r)\Bigr)$ holds for every $f_1,...,f_r\in E^+$. 
		\end{itemize}
	\end{corollary}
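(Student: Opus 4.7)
The natural plan is to reduce everything to Theorem~\ref{T:vals} by introducing the auxiliary map
\[
S\colon E^r\to Y,\qquad S(f_1,\dots,f_r):=\sum_{k=1}^{r}P(f_k).
\]
By construction $S$ is a power sum polynomial of degree $n$ in $r$ variables whose generating $n$-homogeneous polynomial is precisely $P$. Thus Lemma~\ref{L:steadyiffadditive} gives
\[
P\text{ orthogonally additive}\iff S\text{ orthogonally steady},
\]
which will handle the translation of condition $(i)$, while conditions $(ii)$ and $(iii)$ unwind directly to $S$ being total orderization invariant and positively total orderization invariant, respectively.

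The first routine check I would perform is that $S$ is bounded. This follows from the boundedness of $P$ together with the fact that a finite sum of bounded maps is bounded, so the hypotheses of Theorem~\ref{T:vals} are satisfied for $S$ (noting that $r\geq 2$ is assumed).

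Having verified these ingredients, the proof reduces to a single invocation of Theorem~\ref{T:vals}: the equivalences $(i)\iff(ii)\iff(iii)$ for $S$ translate, via the correspondences above, into the equivalences $(i)\iff(ii)\iff(iii)$ claimed for $P$. Since no step is nontrivial beyond the bookkeeping, there is no real obstacle here; the only minor subtlety is confirming that the trivial identity
\[
S(f_1,\dots,f_r)=\sum_{k=1}^{r}P(f_k)
\]
together with symmetry of the $\mathcal{M}_k$ (Proposition~\ref{P:3ton}$(i)$) lets one rewrite the total orderization-invariance condition on $S$ in the exact form stated in $(ii)$ and $(iii)$ of the corollary. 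This finishes the proposed proof.
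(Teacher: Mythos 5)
Your proposal is correct and matches the paper's intent exactly: the paper presents Corollary~\ref{C:vals} as an ``immediate corollary'' of Theorem~\ref{T:vals}, and your argument spells out precisely why it is immediate, by recognizing $S(f_1,\dots,f_r)=\sum_{k=1}^r P(f_k)$ as the bounded power sum polynomial generated by $P$, applying Theorem~\ref{T:vals} to $S$, and translating orthogonal steadiness back to orthogonal additivity of $P$ via Lemma~\ref{L:steadyiffadditive} (which is where $r\geq 2$ is used). The appeal to Proposition~\ref{P:3ton}$(i)$ at the end is unnecessary but harmless, since conditions $(ii)$ and $(iii)$ of the corollary are already literally the (positive) total orderization invariance of $S$.
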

	
	In our final note, we know from e.g. \cite[Theorem~2.3]{Sch} that, in the present context, a symmetric $n$-linear map $T\colon E^n\to Y$ is orthosymmetric if and only if the $n$-homogeneous polynomial $P_T$ it generates is orthogonally additive. By definition, $P_T$ generates a unique power sum polynomial $S_{P_T}$ of degree $n$ in $n$ variables. From Lemma~\ref{L:steadyiffadditive} we know for $n\geq 2$ that $P_T$ is orthogonally additive if and only if $S_{P_T}$ is orthogonally steady. Hence $T$ is orthosymmetric if and only if $S_{P_T}$ is orthogonally steady. By using Theorems~\ref{T:ortho}\&\ref{T:vals}, this logical equivalence can be expressed completely in terms of total orderization invariance.
	
	\begin{corollary}
		Fix $n\in\mathbb{N}$ with $n\geq 2$. Let $E$ be uniformly complete, and let $Y$ be a real separated convex bornological space. Consider a bounded symmetric $n$-linear map $T\colon E^n\to Y$. Let $P_T$ be the $n$-homogeneous polynomial generated by $T$, and denote the power sum polynomial of degree $n$ in $n$ variables which $P_T$ generates by $S_{P_T}$. The following are equivalent.
		\begin{itemize}
			\item[$(i)$] $T$ is total orderization invariant,
			\item[$(ii)$] $T$ is positively total orderization invariant,
			\item[$(iii)$] $S_{P_T}$ is total orderization invariant, and
			\item[$(iv)$] $S_{P_T}$ is positively total orderization invariant.
		\end{itemize}
	\end{corollary}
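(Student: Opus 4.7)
The plan is to reduce the four-way equivalence to two separate characterizations that are already in hand and then bridge them via the known correspondence between orthosymmetry, orthogonal additivity, and orthogonal steadiness. Concretely, Theorem~\ref{T:ortho} applied to the bounded $n$-linear map $T$ immediately gives $(i)\iff(ii)$ and identifies both conditions with the orthosymmetry of $T$. Analogously, Theorem~\ref{T:vals} (with $r=n$) will give $(iii)\iff(iv)$ and identify both with the orthogonal steadiness of $S_{P_T}$.

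Before invoking Theorem~\ref{T:vals}, I would dispatch the one small hypothesis check: $S_{P_T}$ must be a bounded power sum polynomial of degree $n$ in $n$ variables. By construction $S_{P_T}$ is a power sum polynomial with generating polynomial $P_T(f)=T(f,\dots,f)$, and by \cite[Corollary~1]{Kusa2} (already invoked in the proof of Theorem~\ref{T:ortho}) $P_T$ is bounded because $T$ is bounded. Then $S_{P_T}(f_1,\dots,f_n)=\sum_{k=1}^n P_T(f_k)$ is a finite sum of bounded maps, hence bounded. This verification is routine and is really the only thing to check by hand.

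With those two characterizations established, all that remains is the bridge spelled out in the discussion preceding the corollary. Since $T$ is symmetric, \cite[Theorem~2.3]{Sch} tells us $T$ is orthosymmetric if and only if $P_T$ is orthogonally additive. Since $n\geq 2$, Lemma~\ref{L:steadyiffadditive} tells us $P_T$ is orthogonally additive if and only if $S_{P_T}$ is orthogonally steady. Chaining these yields that $T$ is orthosymmetric if and only if $S_{P_T}$ is orthogonally steady, which, combined with the two characterizations above, completes the equivalence $(i)\iff(ii)\iff(iii)\iff(iv)$.

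There is no substantive obstacle here; every component is already proved. The only thing to be careful about is making sure the hypotheses of Theorems~\ref{T:ortho} and \ref{T:vals} are transferred correctly to $T$ and $S_{P_T}$ respectively, and that the symmetry of $T$ (needed for \cite[Theorem~2.3]{Sch}) is used explicitly when invoking the bridge.
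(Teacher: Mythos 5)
Your proof is correct and follows essentially the same route as the paper's own argument (which is given in the paragraph preceding the corollary): invoke Theorems~\ref{T:ortho} and \ref{T:vals} to collapse $(i)\iff(ii)$ and $(iii)\iff(iv)$ into orthosymmetry of $T$ and orthogonal steadiness of $S_{P_T}$ respectively, then bridge via \cite[Theorem~2.3]{Sch} and Lemma~\ref{L:steadyiffadditive}. Your explicit check that $S_{P_T}$ is bounded, using \cite[Corollary~1]{Kusa2}, is a small but legitimate addition that the paper leaves implicit.
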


\end{document}